\documentclass[11pt]{amsproc}
\usepackage[margin=1in]{geometry}
\usepackage{setspace,fullpage}
\geometry{letterpaper}

\usepackage{graphicx}
\usepackage[nice]{nicefrac}
\usepackage{amssymb}
\usepackage{multirow}
\usepackage{array}
\usepackage{hyperref}

\DeclareGraphicsRule{.tif}{png}{.png}{`convert #1 `dirname #1`/`basename #1.tif`.png}
\usepackage{amsmath,amsthm,amscd,amssymb, mathrsfs}
\usepackage{mathtools}
\usepackage{latexsym}

\numberwithin{equation}{section}

\theoremstyle{plain}
\newtheorem{theorem}{Theorem}[section]
\newtheorem{lemma}[theorem]{Lemma}
\newtheorem{corollary}[theorem]{Corollary}
\newtheorem{proposition}[theorem]{Proposition}
\newtheorem{conjecture}[theorem]{Conjecture}

\newtheorem{algorithm }[theorem]{Algorithm}

\theoremstyle{definition}

\theoremstyle{remark}
\newtheorem{remark}[theorem]{Remark}

\newtheorem{case[theorem]}{Case}

\theoremstyle{plain}
\newtheorem{definition}[theorem]{Definition}

\title[\parbox{14cm}{\centering{ Line Set Distance }} \quad]{The Erd\H{o}s-Falconer distance problem between  arbitrary sets  and $k$-coordinatable sets in finite fields}
\author{Hunseok Kang, Doowon Koh, and Firdavs Rakhmonov}
 
\address{College of Engineering and Technology\\
American University of the Middle East\\
Kuwait}
\email{hunseok.kang@aum.edu.kw}

\address{Department of Mathematics\\
Chungbuk National University \\
Cheongju, Chungbuk 28644 Korea}
\email{koh131@chungbuk.ac.kr}

\address{School of Mathematics and Statistics\\
University of St Andrews \\
Scotland}
\email{fr52@st-andrews.ac.uk}

\thanks{Key words and phrases: Finite fields, Fourier transform, distances, lines. \\
Doowon Koh was supported by the National Research Foundation of Korea
(NRF) grant funded by the Korea government (MSIT) (NO. RS-2023-00249597).\\
Firdavs Rakhmonov was  financially supported by a  \emph{Leverhulme Trust Research Project Grant} (RPG-2023-281).}
\subjclass[2010]{52C10, 11T23}

\begin{document} 

\begin{abstract} 
In this paper, we study the cardinality of the distance set $\Delta(A, B)$ determined by two subsets $A$ and $B$ of the $d$-dimensional vector space over a finite field $\mathbb{F}_q$.   Assuming that $A$ or $B$ lies in a $k$-coordinate plane up to translations and rotations, we prove that if $|A||B| > 2q^d$, then $|\Delta(A, B)| > q/2$, where $|\Delta(A, B)|$ denotes the number of distinct distances between elements of $A$ and $B$. 
In particular, we show that our result recovers the sharp $(d+1)/2$ threshold for the Erdős–Falconer distance problem in odd dimensions, where distances are determined by a single set. As an application, we also obtain an improved result on the Box distance problem posed by Borges, Iosevich, and Ou, in the case where $2$ is a square in $\mathbb{F}_q$.
\end{abstract}

\maketitle
\section{Introduction}
Let $\mathbb F_q^d$, $d\ge 1$, be the $d$-dimensional vector space over the finite field $\mathbb F_q$ with $q$ elements, where $q$ is always assumed to be an odd prime power.
Given a set $A\subseteq \mathbb F_q^d$, $d\ge 2$, the distance set, denoted by $\Delta(A)$, is defined as
$$\Delta (A) \coloneqq \{ \| x-y \| : x, y \in A\},$$
where $\|\alpha\|\coloneqq\sum\limits_{i=1}^d \alpha_i^2$ for $\alpha=(\alpha_1, \ldots, \alpha_d)\in \mathbb F_q^d$.

The Erd\H{o}s-Falconer distance problem over general finite fields asks us to determine the smallest exponent $s>0$ such that if $A\subseteq \mathbb F_q^d$ with $|A|\ge C q^s$ for a sufficiently large constant independent of $q$, then $|\Delta(E)|\gtrsim q$.
Here, and throughout the paper, $X \gtrsim Y$ means that there exists a constant $C$ independent of the parameter $q$ so that $X\ge C Y$. We also write $X\lesssim Y$ to mean $Y\gtrsim X$, and we denote $X\sim Y$ when both $X\gtrsim Y$ and $X\lesssim Y$ hold. This problem was introduced by Iosevich and Rudnev \cite{IR07} in 2007 and is considered an extension of the Erdős distance problem over prime fields to more general fields. The original problem was initially studied by Bourgain, Katz, and Tao \cite{BKT}.

Iosevich and Rudnev \cite{IR07} obtained the $s={(d+1)}/{2}$ result on the Erd\H{o}s-Falconer distance problem for all dimensions $d \ge 2$ by relating it to estimates of the Kloosterman sum. In addition, they conjectured that this result could be improved to $s = {d}/{2}$. However, within a few years, counterexamples were found in \cite{HIKR}, showing that this conjecture is not true for general odd dimensions $d \geq 3 $. More precisely, it was shown that the result $s = {(d+1)}/{2}$, proved by Iosevich and Rudnev \cite{IR07}, is optimal for odd dimensions $d \ge 3$. However, the construction of their counterexamples applied only to the case when $ d \equiv 3 \pmod{4}$ or $q \equiv 3 \pmod{4}$. In this paper, we will prove that the result ${(d+1)}/{2}$ due to Iosevich and Rudnev \cite{IR07} is optimal even in the remaining odd dimensional cases with any restricted conditions on the finite field $\mathbb F_q$.  Additionally, for the case when the dimension $d$ is even, we will provide a proof different from the one given in \cite{HIKR}, showing that no better result than ${d}/{2}$ can be obtained in even dimensions $d\ge 2$.

\begin{proposition} \label{P1.1} Let $d\ge 2$ be an integer.
\begin{itemize} \item[(i)] If $d$ is odd, then  for each $\delta>0$,
there exists a set $A \subseteq \mathbb F_q^d$ such that $|A|\sim q^{\frac{d+1}{2}-\delta}$ and it fails to satisfy that $|\Delta(A)|\sim q$.

\item[(ii)] If $d$ is even, then for each $\delta>0$, there exists a set $A\subseteq \mathbb F_q^d$ such that $|A|\sim q^{\frac{d}{2}-\delta}$ and it fails to satisfy that $|\Delta(A)|\sim q$.
\end{itemize}
\end{proposition}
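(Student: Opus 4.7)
The plan is to construct explicit counterexamples for both parts by combining a maximum totally isotropic subspace of $(\mathbb{F}_q^d, Q)$ with a short arithmetic progression on a transverse line, so that all pairwise distances are governed by a single coordinate. First I would bring $Q(x) = \sum_{i=1}^d x_i^2$ to its Witt normal form via a linear change of variables,
\[
Q \;=\; \sum_{j=1}^{\ell} u_j v_j \;+\; Q_U(w_1,\dots,w_r),
\]
where $\ell$ is the Witt index, $r = d - 2\ell \in \{0,1,2\}$, and $Q_U$ is anisotropic. A standard discriminant computation over $\mathbb{F}_q$ gives $\ell = (d-1)/2$ when $d$ is odd; $\ell = d/2$ when $d \equiv 0 \pmod{4}$; and, for $d \equiv 2 \pmod{4}$, $\ell = d/2$ if $q \equiv 1 \pmod{4}$ while $\ell = d/2 - 1$ otherwise. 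Since a linear isomorphism preserves distance sets up to an irrelevant nonzero rescaling, I would work in Witt coordinates throughout and assume $0 < \delta < 1$ (larger $\delta$ is handled trivially by passing to a subset of $V_0$).

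For part (i), with $d = 2\ell+1$, $\ell = (d-1)/2$, and $r = 1$, I would set $V_0 = \{(u_1,0,\dots,u_\ell,0,0)\}$ (a maximum totally isotropic subspace of dimension $\ell$), choose $N = \lceil q^{1-\delta} \rceil$, and define $W = \{(0,\dots,0,t) : 0 \le t \le N-1\}$ and $A = V_0 + W$. Then $|A| = q^\ell\cdot N \sim q^{(d+1)/2-\delta}$. For any $p,p' \in A$ with difference $(u-u',0,\dots,0,t-t')$, total isotropy of $V_0$ reduces the norm computation to $\|p-p'\| = Q_U(t-t') = c(t-t')^2$ with fixed $c \in \mathbb{F}_q^*$. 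Consequently $|\Delta(A)|$ is bounded by the number of distinct values of $(t-t')^2$ for $0\le t,t' \le N-1$; using that $t \mapsto t^2$ is injective on $\{0,\dots,\lfloor q/2\rfloor\}$ (hence on $\{0,\dots,N-1\}$ once $q$ exceeds $2N$), this count is at most $N$. Therefore $|\Delta(A)| = O(q^{1-\delta}) = o(q)$, as required.

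For part (ii), $d = 2\ell + r$ is even. In the case $\ell = d/2$ (so $r = 0$), I would simply take any $A \subseteq V_0$ of size $\lceil q^{d/2-\delta}\rceil$; total isotropy forces all pairwise distances to vanish, giving $|\Delta(A)| = 1$. In the remaining case $\ell = d/2 - 1$ (so $r = 2$, arising only when $d \equiv 2\pmod{4}$ and $q \equiv 3\pmod{4}$), the core $U$ is anisotropic of dimension $2$, so every nonzero $w_0 \in U$ satisfies $Q_U(w_0) \in \mathbb{F}_q^*$ and the restriction of $Q_U$ to the line $\mathbb{F}_q w_0$ is $t \mapsto t^2 Q_U(w_0)$. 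Replaying the part~(i) construction with $W = \{t w_0 : 0 \le t \le N-1\}$, $N = \lceil q^{1-\delta}\rceil$, and $A = V_0 + W$ gives $|A| \sim q^{d/2-\delta}$ and $|\Delta(A)| \le N = o(q)$.

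The main obstacle is purely the Witt-decomposition bookkeeping, in particular nailing down the Witt index in the case $d \equiv 2\pmod 4$, $q \equiv 3 \pmod 4$ where the anisotropic core becomes two-dimensional and the maximum totally isotropic subspace loses a dimension; once the Witt index is fixed in each congruence class, the verification of the size and distance-set counts is immediate.
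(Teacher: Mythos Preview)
Your approach is essentially the paper's: take a maximum totally isotropic subspace for (an equivalent of) $\|\cdot\|$ and thicken it by a small set in a transverse anisotropic direction so that all distances are controlled by a single coordinate. The paper realizes this concretely via the equivalent form $x_1^2 - x_2^2 + \cdots \pm \varepsilon x_d^2$, for which the diagonal $\{(t_1,t_1,\ldots,t_\ell,t_\ell)\}$ is totally isotropic; your Witt-decomposition phrasing is the same idea in more abstract dress, and your case split on the Witt index in even dimension matches what the paper's choice of $\varepsilon$ encodes.

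There is, however, a genuine gap: your set $W = \{t : 0 \le t \le N-1\}$ with $N = \lceil q^{1-\delta}\rceil$, and your injectivity claim for $t\mapsto t^2$ on $\{0,\ldots,\lfloor q/2\rfloor\}$, tacitly assume $q$ is prime. For $q = p^\ell$ with $\ell \ge 2$ the prime subfield has only $p$ elements, so one cannot take an arithmetic progression of length $\sim q^{1-\delta}$ in $\mathbb{F}_q$ once $q^{1-\delta} > p$ (e.g.\ $q=p^3$, $\delta = 1/3$), and there is no ordering on $\mathbb{F}_q$ to speak of. The paper handles exactly this point with a separate lemma: it builds $\Omega_\delta \subseteq \mathbb{F}_q$ with $|\Omega_\delta| \sim |\Omega_\delta - \Omega_\delta| \sim q^{1-\delta}$ by writing $\mathbb{F}_q$ as an $\ell$-dimensional vector space over $\mathbb{F}_p$ and taking a product of short APs in each coordinate. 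If you replace your $W$ by such an $\Omega_\delta$ (and bound $|\Delta(A)|$ by $|\Omega_\delta - \Omega_\delta|$ rather than by an injectivity argument), your proof goes through for all odd prime powers $q$.
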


Although the ${(d+1)}/{2}$ result is known as the answer to the Erd\H{o}s-Falconer problem in odd dimensions, it is conjectured that in even dimensions, the result can be lowered to ${d}/{2}$. As a result in this direction, for $d=2$, an improved bound of ${4}/{3}$ has been established, surpassing the optimal ${(d+1)}/{2}$ result in odd dimensions (see, for example, \cite{BHIPJR, CEHIK10, HLR}). In particular, when restricting to the case $d=2$ and $\mathbb F_q$ being a prime field, a further improvement to ${5}/{4}$ has recently been proven by Murphy, Petridis, Pham, Rudnev, and Stevens \cite{MPPRS}. However, in all even dimensions, it remains a challenging open question to prove the following conjecture.

\begin{conjecture}
Suppose that $d\ge 2$ is an even integer and $A\subseteq \mathbb F_q^d$.
If $|A|\ge C q^{\frac{d}{2}}$ for a sufficiently large constant $C$ independent of $q$, then
$|\Delta(A)|\sim q$.
\end{conjecture}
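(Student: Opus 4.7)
The conjecture is one of the central open problems in the area, so any plan can only hope to identify what would need to be supplied; my proposal is to work inside the Fourier framework of Iosevich--Rudnev and pinpoint precisely the missing ingredient that separates the unconditional exponent $(d+1)/2$ from the conjectural $d/2$.

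The starting step is to apply Cauchy--Schwarz to the counting function $\nu(t) = |\{(x,y)\in A\times A : \|x-y\|=t\}|$:
$$|A|^4 = \Big(\sum_{t \in \Delta(A)} \nu(t)\Big)^2 \leq |\Delta(A)| \sum_{t\in\mathbb F_q^*} \nu(t)^2,$$
so it suffices to show $\sum_t \nu(t)^2 \lesssim |A|^4/q$ under the hypothesis $|A| \geq Cq^{d/2}$. A Fourier expansion, with $S_t$ denoting the sphere of radius $t$, gives
$$\nu(t) = \frac{|S_t|}{q^d}|A|^2 + \mathcal E(t), \qquad \mathcal E(t) = q^{-d} \sum_{m\neq 0} \widehat{S_t}(m)\, |\widehat A(m)|^2.$$
The main term contributes exactly the desired $|A|^4/q$ when squared and summed over $t \in \mathbb F_q^*$, so the conjecture reduces to a matching bound on $\sum_t |\mathcal E(t)|^2$.

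The plan for controlling $\mathcal E(t)$ is to split the frequency sum according to whether $m$ is anisotropic ($\|m\|\neq 0$) or isotropic ($\|m\|=0$, $m \neq 0$). On anisotropic frequencies, the Gauss-sum/Kloosterman identity yields $|\widehat{S_t}(m)| \lesssim q^{(d-1)/2}$, and combined with Parseval $\sum_m |\widehat A(m)|^2 = q^d|A|$ this produces a contribution to $\sum_t |\mathcal E(t)|^2$ of order $|A|^3 q^{-1}$, comfortably below the target as soon as $|A| \geq q$. The difficulty is concentrated on the isotropic cone $N = \{m \neq 0 : \|m\|=0\}$: in even dimensions $|N| \sim q^{d-1}$, the coefficient $\widehat{S_t}(m)$ can be as large as $q^{d/2}$ on $N$, and the only universally available bound on $\sum_{m \in N} |\widehat A(m)|^2$ is the trivial one coming from Parseval, which gives back precisely the $(d+1)/2$ exponent of Iosevich--Rudnev.

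What is required, then, is a nontrivial restriction estimate for the null cone of the form
$$\sum_{m \in N} |\widehat A(m)|^2 \lesssim q^{d-1}|A|,$$
or a suitable $L^p$ refinement that is sharp for extremal $A$; this is exactly the finite-field analogue of a Stein--Tomas-type bound for the light cone. The main obstacle, and the reason the conjecture is open, is proving such an estimate in general: for $d=2$ the cone degenerates into two isotropic lines, which is what enables the incidence-geometric improvements to the $4/3$ and (over prime fields) $5/4$ thresholds, but for even $d \geq 4$ the cone is a $(d-1)$-dimensional quadric and the requisite restriction theory is not available. A programmatic route would be to combine a polynomial-method analysis along isotropic lines with the $k$-coordinatable structural theorems of the present paper: if $A$ concentrates near an affine isotropic subspace, the coordinate-plane results apply directly, while on the remaining pseudorandom part one would attempt to prove the cone restriction bound via character-sum methods tailored to the quadric $N$. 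Supplying this pseudorandom restriction estimate is the hard step, and at present I see no way to execute it unconditionally.
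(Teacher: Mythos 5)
The statement you were asked about is labeled as a \emph{Conjecture} in the paper, and the paper offers no proof of it --- it is presented as an open problem (the authors only prove it under additional structural hypotheses, e.g.\ when one of the sets is $k$-coordinatable, and they separately show via Proposition \ref{P1.1}(ii) that the exponent $d/2$ could not be lowered further). So there is no proof in the paper to compare yours against, and you were right not to manufacture one.

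Your proposal is an accurate description of the standard Iosevich--Rudnev framework and of where it breaks down: the Cauchy--Schwarz reduction to $\sum_t \nu(t)^2$, the main term contributing $|A|^4/q$, the anisotropic frequencies being handled by the square-root cancellation $|\widehat{S_t}(m)| \lesssim q^{(d-1)/2}$ together with Plancherel, and the entire difficulty concentrating on the isotropic cone $N=\{m\ne 0: \|m\|=0\}$, where $\widehat{S_t}(m)$ can be of size $q^{d/2}$ in even dimensions. But the proposal is not a proof: the step you flag as ``what is required'' --- a nontrivial restriction estimate of the form $\sum_{m\in N}|\widehat{A}(m)|^2 \lesssim q^{d-1}|A|$ for arbitrary $A$ --- is precisely the open part, and no argument for it is given. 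As you note yourself, the trivial Plancherel bound on this sum recovers only the $(d+1)/2$ exponent, and the proposed ``structured versus pseudorandom'' dichotomy is a heuristic with no executable pseudorandom case. To be clear about the one place your sketch connects to this paper's actual results: the structured case you mention (concentration on an affine subspace) is essentially what Proposition \ref{ProRes} and Theorem \ref{mainthm} handle, but that machinery requires the subspace to be $k$-coordinatable and says nothing about general sets, so it cannot close the gap. The conjecture remains unproved by your proposal, which you candidly acknowledge.
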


In an effort to understand the Erd\H{o}s-Falconer conjecture, generalized versions and various modified forms of this problem have been introduced and studied.
As the most natural generalization of the Erd\H{o}s-Falconer distance problem, one considers the generalized distance set 
$$
\Delta(A, B) \coloneqq \{\|x - y\| : x \in A, y \in B\}
$$
determined by two sets \(A\), \(B \subseteq \mathbb{F}_q^d\). Then, one can ask the question of determining the minimum size of \(|A||B|\) required to ensure that \(|\Delta(A, B)| \sim q\). In \cite{S06}, Shparlinski proved that if $A, B \subseteq \mathbb F_q^d$, then 
$$ |\Delta(A, B)| \ge \frac{1}{2} \min\left\{ q, ~ \frac{|A||B|}{q^d} \right\}.$$
This clearly implies that if $A, B\subseteq \mathbb F_q^d$ with $|A||B|\ge q^{d+1}$, then $|\Delta(A, B)|\ge q/2$. The exponent $(d+1)$ is optimal for odd dimensions $d\ge 3$, which follows from Proposition \ref{P1.1} (i). On the other hand, it was proven by Koh and Shen \cite{KS} that the exponent \( d+1 \), which is the sharp exponent in odd dimensions $d$, can be lowered to ${8}/{5}$ in the case of \( d = 2 \). However, for higher even dimensions $d\ge 4$, the exponent $(d+1)$ is still the best known result on the generalized Erd\H{o}s-Falconer distance problem, and the conjectured exponent is $d$ as stated below.

\begin{conjecture} \label{con1.3}
Let $d\ge 2$ be an even integer. If $A, B\subseteq \mathbb F_q^d$ with $|A||B|\ge C q^{d}$ for a sufficiently large constant $C$ independent of $q$, we have
$|\Delta(A, B)|\sim q$.
\end{conjecture}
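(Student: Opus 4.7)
The plan is to set up a Fourier analytic counting argument in the spirit of Iosevich--Rudnev and Shparlinski, and then attempt to break past the $d+1$ barrier using the finer geometry of spheres available in even dimensions. Define the counting function
$$\nu(t) = |\{(x,y) \in A \times B : \|x-y\|=t\}|,$$
so that $\sum_t \nu(t) = |A||B|$ and Cauchy--Schwarz yields
$$|\Delta(A,B)| \geq \frac{|A|^2 |B|^2}{\sum_t \nu(t)^2}.$$
Thus the task reduces to proving $\sum_t \nu(t)^2 \lesssim |A|^2 |B|^2 / q$ under the hypothesis $|A||B| \gtrsim q^d$, which would give $|\Delta(A,B)| \gtrsim q$.

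I would expand $1_{S_t}(x-y)$ via Fourier inversion on $\mathbb{F}_q^d$ and decompose the second moment as
$$\sum_t \nu(t)^2 = \frac{|A|^2|B|^2}{q} + \mathrm{Err},$$
where $\mathrm{Err}$ is a bilinear sum over nonzero frequencies of the form $\widehat{1_A}(\xi)\overline{\widehat{1_A}(\xi)} \widehat{1_B}(\xi)\overline{\widehat{1_B}(\xi)}$ weighted by a Gauss-sum kernel coming from the Fourier coefficients of $S_t$. Plancherel controls the $A$ and $B$ factors, so everything reduces to the size and structure of $\widehat{1_{S_t}}$. The classical bound $|\widehat{1_{S_t}}(\xi)| \lesssim q^{-(d+1)/2}$ off the isotropic cone, combined with the additional Gauss-sum cancellation that occurs in even dimensions, should be enough to handle the off-cone contribution at the conjectured threshold.

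The true obstacle lives on the isotropic cone $C = \{\xi : \|\xi\|=0\}$, which has cardinality $\sim q^{d-1}$. On $C \setminus \{0\}$ the sphere's Fourier transform picks up no oscillatory cancellation, leaving one to control
$$\sum_{\xi \in C \setminus \{0\}} |\widehat{1_A}(\xi)|^2 \, |\widehat{1_B}(\xi)|^2,$$
which Plancherel alone cannot push below the threshold that produces only $|A||B| \gtrsim q^{d+1}$. This is precisely the gap that keeps the conjecture open for $d \geq 4$, and I expect it to be the main obstacle.

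To bridge this gap I would pursue two complementary strategies. The first is to establish an $L^2$--$L^2$ restriction or bilinear extension estimate for the isotropic cone in even dimensions, which is the finite-field analogue of Fefferman's cone restriction problem; any improvement over Plancherel on $C$ translates directly into progress on Conjecture \ref{con1.3}. The second, closer in spirit to the Murphy--Petridis--Pham--Rudnev--Stevens refinement that reaches $5/4$ in $d=2$ over prime fields, is to feed in a sum-product or incidence input to separately bound the contribution from sets $A, B$ that concentrate on low-dimensional subvarieties of $C$, treating the remaining generic part by Fourier. A successful resolution almost certainly requires a genuinely new restriction, incidence, or algebraic input on the isotropic cone, rather than a refinement of the existing Fourier toolbox.
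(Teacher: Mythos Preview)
The statement you were asked to prove is labeled \emph{Conjecture} in the paper, and the paper does not prove it. It is stated as an open problem; the paper's contribution is Theorem~\ref{mainthm}, which establishes the conjectured threshold $|A||B|\gtrsim q^d$ only under the additional structural hypothesis that one of $A,B$ is a $k$-coordinatable set. So there is no proof in the paper to compare your attempt against.

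Your proposal is an honest and accurate diagnosis of why the conjecture is open rather than a proof of it. The Fourier/counting framework you set up is standard and matches what the paper uses (Lemma~\ref{DistFormula}), and you correctly isolate the obstruction: controlling the mass of $\widehat{1_A}$ and $\widehat{1_B}$ on the isotropic cone, equivalently obtaining an $L^2$ restriction estimate for spheres better than Plancherel. The paper sidesteps exactly this obstruction by assuming $B$ lies in a $k$-coordinatable plane, which allows a direct computation (Proposition~\ref{ProRes}) showing $\sum_{m\in S_t^{d-1}}|\widehat{B}(m)|^2 \le 2q^{-d-1}|B|$; plugging this into Lemma~\ref{DistFormula} gives the result with no further input. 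Your two suggested strategies---a cone restriction estimate or a sum-product/incidence input---are reasonable avenues, but neither is carried out, and as you yourself note, the general case would require a genuinely new ingredient. In short: your write-up is a correct explanation of the difficulty, not a proof, and the paper does not claim one either.
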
 
One may be interested in finding properties of sets $A$ and $B$ that satisfy Conjecture \ref{con1.3}. In the paper \cite{KPV}, conditions on the dimension \( d \) and the finite field \( \mathbb F_q \) were introduced under which Conjecture \ref{con1.3} holds when the set \( A \) is an arbitrary subset of \( \mathbb F_q^d \) and the set \( B \) lies on a sphere or a paraboloid in $\mathbb F_q^d$. 

The main goal of this paper is to provide conditions under which Conjecture \ref{con1.3} holds when $B$ lies in an affine $k$-plane and to apply this result to the Box distance problem studied by Borges, Iosevich, and Ou \cite{BIO}, yielding an improved result.

\subsection{Main results}
We begin by introducing some definitions to clearly state our main results. In $\mathbb F_q^d$, a $k$-dimensional coordinate plane is a $k$-dimensional subspace that includes the origin and extends along $k$-coordinate axes. 

\begin{definition} [$k$-dimensional coordinate plane] In $\mathbb F_q^d$, for any set of $k$ coordinate axes $\{i_i, i_2, \ldots, i_k\} \subseteq \{1, 2, \ldots, d\}$, the $k$-dimensional coordinate plane is defined as the subspace where only the coordinates corresponding to these axes vary freely, while the remaining coordinates are fixed at zero. More formally, a $k$-dimensional coordinate plane is the set
$$P_{i_1, i_2, \dots, i_k} \coloneqq \{ (x_1, x_2, \dots, x_d) \in \mathbb F_q^d \mid x_j = 0 \text{ for all } j \notin \{ i_1, i_2, \dots, i_k \}\}.$$
\end{definition}

\begin{definition} We say that a $k$-dimensional affine subspace $\Pi_k \subseteq \mathbb F_q^d$ is a $k$-coordinatable plane if $\Pi_k$ can be transformed to a $k$-dimensional coordinate plane by a rotation or a translation.  A subset of a $k$-coordinatable plane is called as a $k$-coordinatable set.
\end{definition}
In $\mathbb F_q^d$, not every $k$-dimensional affine subspace is a $k$-coordinatable plane. For example, one easily checks that the line $y=x$ in $\mathbb F_q^2$ is not a $1$-coordinatable plane unless $2\in \mathbb F_q$ is a square number.

Our main result is stated as follows.
\begin{theorem}\label{mainthm}
Suppose that $A\subseteq \mathbb F_q^d$ or $B\subseteq \mathbb F_q^d$ is a $k$-coordinatable set. If $|A||B|\ge 2q^d$, then $|\Delta(A, B)|\ge q/2$.
\end{theorem}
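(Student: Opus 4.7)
The plan is to exploit the $k$-coordinatable structure to split the distance function additively, then combine Cauchy--Schwarz with a character-sum analysis. By symmetry assume $B$ is the $k$-coordinatable set; since $\|\cdot\|$ is invariant under orthogonal transformations and $\Delta(A,B)$ is invariant under translating $A$ and $B$ by the same vector, I may assume $B=B'\times\{0\}^{d-k}$ with $B'\subseteq\mathbb F_q^k$. Writing $x=(x',x'')\in\mathbb F_q^k\times\mathbb F_q^{d-k}$, the distance splits as
\[
\|x-y\|_d=\|x'-y'\|_k+\|x''\|_{d-k}.
\]
Setting $\nu(t)\coloneqq|\{(x,y)\in A\times B:\|x-y\|=t\}|$, Cauchy--Schwarz gives $|\Delta(A,B)|\ge(|A||B|)^2/\sum_t\nu(t)^2$, and character orthogonality with a nontrivial additive character $\chi$ of $\mathbb F_q$ yields
\[
\sum_t\nu(t)^2=\frac{(|A||B|)^2}{q}+\frac{1}{q}\sum_{s\ne 0}|T(s)|^2,\qquad T(s)\coloneqq\sum_{x\in A,\,y\in B}\chi(s\|x-y\|).
\]
Under $|A||B|\ge 2q^d$, the conclusion $|\Delta(A,B)|\ge q/2$ reduces to the key estimate $\sum_{s\ne 0}|T(s)|^2\le 2q^d|A||B|$.

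Using the distance split I factor $T(s)=\sum_{x'}V_s(x')h_s(x')$, where
\[
V_s(x')\coloneqq\sum_{x''}1_A(x',x'')\chi(s\|x''\|_{d-k}),\qquad h_s(x')\coloneqq\sum_{y'\in B'}\chi(s\|x'-y'\|_k),
\]
and Cauchy--Schwarz in $x'$ yields $|T(s)|^2\le\bigl(\sum_{x'}|h_s(x')|^2\bigr)\bigl(\sum_{x'}|V_s(x')|^2\bigr)$. A short orthogonality calculation gives $\sum_{x'}|h_s(x')|^2=q^k|B|$ for every $s\ne 0$, together with the identity $\sum_{s\in\mathbb F_q}|V_s(x')|^2=qN(x')$ where
\[
N(x')\coloneqq|\{(x_1'',x_2'')\in A_{x'}^2:\|x_1''\|_{d-k}=\|x_2''\|_{d-k}\}|,\quad A_{x'}\coloneqq\{x'':(x',x'')\in A\},
\]
so $\sum_{s\ne 0}|V_s(x')|^2\le qN(x')$.

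The decisive step is a sphere size bound in the complementary factor. Writing $N(x')=\sum_t|A_{x'}\cap S_t|^2$ with $S_t\coloneqq\{x''\in\mathbb F_q^{d-k}:\|x''\|_{d-k}=t\}$, the classical Gauss-sum computation gives $|S_t|\le 2q^{d-k-1}$ for every $t\in\mathbb F_q$, provided $d-k\ge 1$. Hence $N(x')\le 2q^{d-k-1}|A_{x'}|$ and $\sum_{x'}N(x')\le 2q^{d-k-1}|A|$. Assembling the bounds,
\[
\sum_{s\ne 0}|T(s)|^2\;\le\;q^k|B|\cdot q\cdot 2q^{d-k-1}|A|\;=\;2q^d|A||B|,
\]
as required. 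The main obstacle is organizing the Cauchy--Schwarz/Plancherel steps so that the final constants match the factor of $2$ in the hypothesis; conceptually, the improvement from Shparlinski's threshold $|A||B|\ge q^{d+1}$ down to $|A||B|\ge 2q^d$ comes entirely from the sphere bound $|S_t|\le 2q^{d-k-1}$, which is a factor $q$ better than the trivial ball bound $|S_t|\le q^{d-k}$ available without the coordinate structure.
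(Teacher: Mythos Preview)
Your proof is correct. Both arguments start from the Cauchy--Schwarz inequality $|\Delta(A,B)|\ge (|A||B|)^2/\sum_t\nu(t)^2$ and ultimately rely on the sphere size bound $|S_t|\le 2q^{d-k-1}$ in the complementary factor $\mathbb F_q^{d-k}$, but they organize the analysis differently. The paper expresses $\sum_t\nu(t)^2$ through the Fourier transform of the homogeneous variety $V_0\subseteq\mathbb F_q^{2d}$, arriving at a formula that isolates the spherical $L^2$ restriction quantity $\max_t\sum_{m\in S_t^{d-1}}|\widehat{B}(m)|^2$; the coordinate structure of $B$ is then used to bound this restriction sum (Proposition~\ref{ProRes}), and Plancherel handles the $A$ side. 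You instead work directly with the character sum $T(s)$, factor it via the coordinate split, and apply Cauchy--Schwarz in the spatial variable $x'$; the sphere bound then enters on the $A$ side through the fiber count $N(x')$, while a short orthogonality computation disposes of the $B$ side. Your route is more elementary in that it avoids the $V_0$ formalism and the restriction-theory framing, and it makes transparent exactly where the gain of a factor $q$ over the trivial bound occurs. The paper's formulation, on the other hand, isolates the restriction estimate $\mathcal R_t(B)\le 2q^{-d-1}|B|$ as a standalone result of independent interest and situates the theorem within the broader restriction--distance connection.
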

The two-dimensional version of Theorem \ref{mainthm} can be stated more specifically, since in two dimensions, a 1-coordinate plane is one of the lines obtained by translating the line \( x_2 = \lambda x_1 \), where \( x_2 = \lambda x_1\) can be transformed into the $x_1$-axis by rotation. To this end, we introduce some notations.
\begin{definition} \label{DefTL}
The line \( x_2 = \lambda x_1 \) translated by a vector \( (a, b) \in \mathbb{F}_q^2 \) is denoted as \( L_\lambda(a, b) \). That is,  
\[
L_\lambda(a, b) = \{(x_1 + a, x_2 + b) \in \mathbb{F}_q^2 \mid x_2 = \lambda x_1\}.
\]  
In particular, when \( (a, b) = (0,0) \), we denote \( L_\lambda = L_\lambda(a, b) \).
\end{definition}

We denote by $\eta$ the quadratic character of $\mathbb F_q^*$, which is defined by
\[\eta(t) =
\begin{cases}
1, & \text{if } t \text{ is a square in } \mathbb{F}_q^* , \\
-1, & \text{otherwise}.
\end{cases}\]

In Lemma \ref{1Rotation}, we will observe that if $1+\lambda^2$ is a square, then the line $L_\lambda(a,b)$ is a $1$-coordinatable plane in $\mathbb F_q^2$ for all $(a, b)\in \mathbb F_q^2$. From this observation, we are able to obtain a more concrete two-dimensional version of Theorem \ref{mainthm} as follows:

\begin{corollary} \label{mainthmC}  Let $(a, b)\in \mathbb F_q^2$ and let $\lambda\in \mathbb F_q^*$ with $\eta(1+\lambda^2)=1$. If $|A||B|\ge 2 q^2$ for $A\subseteq \mathbb F_q^2$ and $B\subseteq L_\lambda(a,b)$, then $|\Delta(A, B)|\ge q/2$.
\end{corollary}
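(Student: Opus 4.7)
The plan is to deduce Corollary \ref{mainthmC} immediately from Theorem \ref{mainthm} once one verifies that, under the hypothesis $\eta(1+\lambda^2)=1$, the set $B$ is a $1$-coordinatable set in $\mathbb F_q^2$. The bulk of the work has already been done in the main theorem, so the corollary reduces to an observation about when a given line in $\mathbb F_q^2$ can be rotated and translated onto a coordinate axis.

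The first step is to invoke Lemma \ref{1Rotation}, which the paper has already flagged: whenever $1+\lambda^2$ is a nonzero square, the translated line $L_\lambda(a,b)$ is a $1$-coordinatable plane for every $(a,b)\in\mathbb F_q^2$. The content of this lemma is that an orthogonal transformation on $\mathbb F_q^2$ is given by a matrix of the form $\begin{pmatrix} c & -s \\ s & c \end{pmatrix}$ with $c^2+s^2=1$, and such a rotation maps the direction vector $(1,\lambda)$ to $(\sqrt{1+\lambda^2},0)$; the square-root step is precisely where the hypothesis $\eta(1+\lambda^2)=1$ is used. After performing this rotation, a further translation sends the image of $L_\lambda(a,b)$ onto the $x_1$-axis, which is the $1$-coordinate plane $P_1$.

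Once this has been noted, the proof is immediate. Since $B\subseteq L_\lambda(a,b)$ and $L_\lambda(a,b)$ is a $1$-coordinatable plane, $B$ is a $1$-coordinatable set. The hypothesis $|A||B|\ge 2q^2 = 2q^d$ with $d=2$ lets us apply Theorem \ref{mainthm} with $k=1$, concluding that $|\Delta(A,B)|\ge q/2$. There is no genuine obstacle here: the only nontrivial ingredient beyond Theorem \ref{mainthm} itself is the square-condition in Lemma \ref{1Rotation}, which is exactly the algebraic condition for the rotation carrying $(1,\lambda)$ to a multiple of $(1,0)$ to exist over $\mathbb F_q$.
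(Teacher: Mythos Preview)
Your proposal is correct and follows exactly the paper's intended argument: the paper does not give a separate formal proof of Corollary~\ref{mainthmC} but instead notes, just before stating it, that Lemma~\ref{1Rotation} shows $L_\lambda(a,b)$ is $1$-coordinatable whenever $\eta(1+\lambda^2)=1$, and then the corollary is the $d=2$, $k=1$ instance of Theorem~\ref{mainthm}. Your write-up simply makes this deduction explicit.
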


\subsection{Results from the application of Theorem \ref{mainthm} and Corollary \ref{mainthmC}}
As a direct application of Theorem \ref{mainthm}, we will derive interesting results on the Erd\H{o}s-Falconer distance and the Box distance problem over finite fields, which will be introduced below. 
\subsubsection{On the Erd\H{o}s-Falconer distance}
We state the results on the Erd\H{o}s-Falconer distance problem determined by a set \( A \subseteq \mathbb{F}_q^d \), which can be derived from our Theorem \ref{mainthm}.

\begin{proposition}\label{ProK} Let $0< \alpha \le 1$. Suppose that $A\subseteq \mathbb F_q^d$ contains a subset $B$ of a $k$-coordinatable plane with $|B|\ge |A|^\alpha$. Then if $|A|\ge 2^{1/(\alpha+1)} q^{d/(1+\alpha)}$, we have $|\Delta(A)|\ge q/2$.
\end{proposition}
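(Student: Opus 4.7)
The plan is to derive Proposition \ref{ProK} as a direct corollary of Theorem \ref{mainthm} by taking the two sets in the generalized distance problem to be $A$ itself and the $k$-coordinatable subset $B \subseteq A$. The essential observation is the trivial containment
\[
\Delta(A, B) \subseteq \Delta(A),
\]
which holds because every pair $(x, y) \in A \times B$ is also a pair in $A \times A$. Consequently, it suffices to prove a lower bound of $q/2$ for $|\Delta(A, B)|$, which is exactly the conclusion of Theorem \ref{mainthm} applied to this pair.

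First I would check the hypothesis of Theorem \ref{mainthm} for the pair $(A, B)$. Since $B$ is a subset of a $k$-coordinatable plane, it is a $k$-coordinatable set by definition. Using $|B| \ge |A|^\alpha$, I get
\[
|A||B| \ge |A| \cdot |A|^\alpha = |A|^{1+\alpha}.
\]
The hypothesis $|A| \ge 2^{1/(1+\alpha)} q^{d/(1+\alpha)}$ then gives $|A|^{1+\alpha} \ge 2 q^d$, i.e., $|A||B| \ge 2 q^d$, so Theorem \ref{mainthm} applies and yields $|\Delta(A, B)| \ge q/2$.

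Combining this with the containment $\Delta(A, B) \subseteq \Delta(A)$ produces $|\Delta(A)| \ge q/2$, which is the desired conclusion. There is no real obstacle here beyond invoking Theorem \ref{mainthm} correctly and verifying the simple arithmetic bound on $|A||B|$; all the work has been pushed into the main theorem. The condition $0 < \alpha \le 1$ is used only implicitly to make the hypothesis $|B| \ge |A|^\alpha$ a meaningful (and nontrivial) constraint, since for a nontrivial $B \subseteq A$ one automatically has $|B| \le |A|$.
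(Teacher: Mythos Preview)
Your proof is correct and follows exactly the same route as the paper: reduce to $|\Delta(A,B)|$ via the containment $\Delta(A,B)\subseteq\Delta(A)$, verify $|A||B|\ge |A|^{1+\alpha}\ge 2q^d$ from the hypotheses, and invoke Theorem~\ref{mainthm}. There is nothing to add.
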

\begin{itemize}
\item Notice from the above proposition that for the sets $A\subseteq \mathbb F_q^d$ with $\frac{d-1}{d+1} <\alpha\le 1$, we improve the $(d+1)/2$ result. 
\item For $d=2$, the current best known threshold $4/3$ can be improved for the sets $A\subseteq \mathbb F_q^2$ with $1/2<\alpha \le 1$. In particular, for the prime field $\mathbb F_q$, the best known result $5/4$ can be improved for the sets $A\subseteq \mathbb F_q^2$ with $3/5<\alpha$.
\end{itemize}

For $j\in \mathbb F_q$, let $P_j=\{(x_1, x_2, \ldots, x_{d-1}, j)\in \mathbb F_q^d: x_i\in \mathbb F_q,\,i=1,2,\ldots, d-1\}$. Since $P_j$ is a $(d-1)$-coordinate plane and $\mathbb F_q^d=\bigcup_{j\in \mathbb F_q} P_j$, for any set $A\subseteq \mathbb F_q^d$, there exists a $P_j$ with $|P_j\cap A|\ge |A|/q$.
Hence, any set $A\subseteq \mathbb F_q^d$ contains a $(d-1)$-coordinate plane $P_j\cap A$ with cardinality at least $|A|/q$. Thus, the following corollary follows immediately from Theorem \ref{mainthm}.
\begin{corollary}\label{maincor} If $A\subseteq \mathbb F_q^d$ with $|A|\ge \sqrt{2} q^{\frac{d+1}{2}}$, then $|\Delta(A)|\ge q/2$.
\end{corollary}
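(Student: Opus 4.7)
The plan is to derive the corollary directly from Theorem \ref{mainthm} via a pigeonhole reduction along coordinate hyperplanes, exactly as anticipated in the discussion preceding the statement. The point is that any set $A \subseteq \mathbb{F}_q^d$ automatically contains a large $(d-1)$-coordinatable subset, so the two-set threshold $|A||B| \geq 2q^d$ can be fed with $B \subseteq A$ and converted into a one-set distance estimate.

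Concretely, I would first partition $\mathbb{F}_q^d$ into the $q$ disjoint hyperplanes $P_j = \{(x_1,\dots,x_{d-1},j) : x_i \in \mathbb{F}_q\}$ for $j \in \mathbb{F}_q$. Each $P_j$ is a translate of the coordinate plane $P_0$, hence a $(d-1)$-coordinatable plane. A standard averaging argument over the $q$ summands of $|A| = \sum_{j \in \mathbb{F}_q} |A \cap P_j|$ produces some $j_0$ with $|A \cap P_{j_0}| \geq |A|/q$. Setting $B := A \cap P_{j_0}$ gives a $(d-1)$-coordinatable subset of $A$, and the hypothesis $|A| \geq \sqrt{2}\, q^{(d+1)/2}$ yields
$$|A||B| \;\geq\; \frac{|A|^2}{q} \;\geq\; \frac{2 q^{d+1}}{q} \;=\; 2q^d,$$
which is precisely the input required by Theorem \ref{mainthm}. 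Applying that theorem gives $|\Delta(A,B)| \geq q/2$, and since $B \subseteq A$ implies $\Delta(A,B) \subseteq \Delta(A)$, the conclusion $|\Delta(A)| \geq q/2$ follows immediately.

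There is no genuine obstacle in this argument, since all of the analytic work is already encapsulated in Theorem \ref{mainthm}; the only conceptual point is recognizing that each $P_j$ is a pure translate of $P_0$ and therefore $(d-1)$-coordinatable with no rotation needed. It is worth noting that the constant $\sqrt{2}$ in the hypothesis is chosen precisely so that the $q^{-1}$ loss incurred by pigeonholing onto one hyperplane is exactly absorbed by the factor $2$ in the threshold $|A||B| \geq 2q^d$ of Theorem \ref{mainthm}, which accounts for the numerical form of the corollary.
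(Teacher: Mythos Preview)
Your proposal is correct and follows essentially the same argument as the paper: pigeonhole onto the hyperplanes $P_j$, take $B=A\cap P_{j_0}$ as a $(d-1)$-coordinatable subset with $|B|\ge |A|/q$, verify $|A||B|\ge 2q^d$, and apply Theorem~\ref{mainthm} together with $\Delta(A,B)\subseteq\Delta(A)$. There is nothing to add or correct.
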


The exponent $(d+1)/2$ in Corollary \ref{maincor} is sharp in odd dimensions and was obtained from Theorem \ref{mainthm}. Hence, Theorem \ref{mainthm} is generally sharp in odd dimensions.

\

\subsubsection{Result on the Box distance problem}
To study a variation of the Erd\H{o}s-Falconer distance problem, given \( E \subseteq \mathbb{F}_q^d, d\ge 1 \), Borges, Iosevich, and Ou \cite{BIO} defined  
\[
\Box(E) \coloneqq \{\|x - y\| + \|x - z\| : x, y, z \in E, \, y \neq z\},
\] 
and studied the problem of determining the smallest exponent $\beta>0$ such that 
if $|E|\ge C q^\beta$ for a sufficiently large constant $C>0$ independent of $q$, then $|\Box(E)|\sim q$. We will refer to this problem as the Box distance problem.
They proved the following result.

\begin{theorem} [Borges-Iosevich-Ou, \cite{BIO}, Theorem 2.2] \label{AlexThm}
If $E\subseteq \mathbb F_q^d$, $d\ge 1$, then the following statements hold:
\begin{itemize}
\item [(i)] If $d = 1$ and $|E| \ge Cq^{\frac{3}{4}}$, with $C$ sufficiently large, then $|\Box(E)|\sim q$.
\item [(ii)] If $d = 2$ and $|E| \ge Cq^{\frac{4}{3}}$, with $C$ sufficiently large, then $|\Box(E)|\sim q$.
\item [(iii)] If $d \ge 3$ and $|E| \ge C q^{\frac{d+1}{2}}$, with $C$ sufficiently large, then $|\Box(E)|\sim q$.
\end{itemize}
\end{theorem}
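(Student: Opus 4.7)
The plan is to handle the high-dimensional cases by a clean reduction to the standard Erd\H{o}s-Falconer distance problem and to treat $d=1$ by a direct Fourier argument. For cases (ii) and (iii), the key observation is that, whenever $|E| \ge 2$, for any $x \in E$ one may take $z = x$ and any $y \in E \setminus \{x\}$ in the definition of $\Box(E)$, so that $\|x-y\| + \|x-z\| = \|x-y\| \in \Box(E)$. Hence
\[
\Box(E) \supseteq \Delta(E) \setminus \{0\},
\]
and case (iii) follows at once from the Iosevich-Rudnev theorem \cite{IR07}, while case (ii) follows from the planar $4/3$ distance-set bound \cite{CEHIK10}.

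For case (i) this inclusion carries no useful information, because in one variable $\Delta(E) \subseteq \{t^2 : t \in \mathbb{F}_q\}$ has cardinality at most $(q+1)/2$ no matter how large $E$ is. I would instead estimate $|\Box(E)|$ by the standard $L^2$-method. Set
\[
\nu(t) = \#\{(x,y,z) \in E^3 : y \ne z,\ (x-y)^2 + (x-z)^2 = t\},
\]
so that $\sum_t \nu(t) = |E|^3 - |E|^2$ is supported on $\Box(E)$, and Cauchy-Schwarz gives
\[
|\Box(E)| \ge \frac{(|E|^3 - |E|^2)^2}{\sum_t \nu(t)^2}.
\]
The task then reduces to showing $\sum_t \nu(t)^2 \lesssim |E|^6/q + q^2 |E|^2$: the two terms on the right balance exactly at $|E| \sim q^{3/4}$, producing the stated threshold.

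To prove this bound I would expand $\sum_t \nu(t)^2$ via additive-character orthogonality as $q^{-1}(|E|^3 - |E|^2)^2 + q^{-1}\sum_{s \ne 0} |S(s)|^2$, where for a fixed nontrivial additive character $\chi$ of $\mathbb{F}_q$,
\[
S(s) = \sum_{(x,y,z) \in E^3,\, y \ne z} \chi\bigl(s\bigl[(x-y)^2 + (x-z)^2\bigr]\bigr).
\]
The algebraic identity $(x-y)^2 + (x-z)^2 = 2(x-(y+z)/2)^2 + (y-z)^2/2$ diagonalises the quadratic form inside $\chi$; setting $m = (y+z)/2$ and $d = y - z \ne 0$, the sum $S(s)$ decouples into a Gauss-type inner sum $\sum_{x \in E} \chi(2s(x-m)^2)$ carrying the outer weight $1_E(m+d/2)\,1_E(m-d/2)\,\chi(sd^2/2)$. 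Completing the square in $x$ replaces the inner sum by a standard Gauss sum of modulus $\sqrt{q}$ multiplied by the Fourier transform of $1_E$ at a point depending on $(s,m)$, and Plancherel applied to the two $1_E$-factors in $m$ should yield $\sum_{s \ne 0} |S(s)|^2 \lesssim q^3 |E|^2$.

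The main obstacle is precisely this last step: several Gauss sums must be combined, the $y = z$ exclusion and the singular locus $2t - (y-z)^2 = 0$ (where the one-variable root count $1 + \eta(2t - (y-z)^2)$ collapses to a single solution) must be handled cleanly, and each Plancherel application must be used at the right stage so as not to waste a factor of $q$. Carried out sharply this produces the $q^2 |E|^2$ error that pins the threshold exactly at $3/4$; any looser treatment of the character sums delivers only the weaker $q^{2/3}$ threshold and loses the sharp one-dimensional statement.
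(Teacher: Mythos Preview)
This theorem is not proved in the paper: it is quoted from Borges--Iosevich--Ou \cite{BIO} as background for the paper's own improvement, Theorem~\ref{ThmK}. So there is no proof here to compare your proposal against.

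Your reductions for (ii) and (iii) are correct and clean: taking $z=x$ in the definition of $\Box(E)$ (which forces $y\ne x$, hence $y\ne z$) gives $\Box(E)\supseteq\{\|x-y\|:x,y\in E,\ x\ne y\}\supseteq\Delta(E)\setminus\{0\}$, and the distance thresholds of \cite{IR07} and \cite{CEHIK10} finish those cases immediately.

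For (i) your $L^2$ outline is reasonable, but two points need correction. First, in the step ``completing the square in $x$ replaces the inner sum by a standard Gauss sum\ldots'' the variable $x$ ranges over $E$, not over all of $\mathbb F_q$, so no Gauss sum appears directly; one must first Fourier-expand $1_E$ before the quadratic sum becomes genuine. Second, your closing sentence has the inequality reversed: the exponent $2/3$ is \emph{smaller} than $3/4$, so a $q^{2/3}$ threshold is \emph{stronger}, not weaker. Indeed the present paper proves precisely that stronger $q^{2/3}$ threshold in Theorem~\ref{ThmK} (under $\eta(2)=1$), so $3/4$ is not sharp. The paper's route is different from yours: it writes $(x-y)^2+(x-z)^2=\|(x,x)-(y,z)\|$ in $\mathbb F_q^2$, observes $\Box(E)\supseteq\Delta(A,B)$ with $B=\{(x,x):x\in E\}$ on the diagonal line and $A\subseteq E\times E$, and applies Theorem~\ref{mainthm}. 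Your direct one-variable calculation, carried out carefully, amounts to computing the spherical $L^2$ restriction bound for that diagonal line by hand (a line meets each circle $S_t^1$ in at most two points), and actually yields the error $q|E|^3$, which is at most your claimed $q^2|E|^2$ and hence more than enough for the stated $q^{3/4}$ threshold.
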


If we consider only the finite field $\mathbb F_q$ in which $2$ is a square, namely $\eta(2)=1$, we are able to improve the first part of Theorem \ref{AlexThm}.
More precisely, we have the following result.

\begin{theorem} \label{ThmK} Suppose that $2$ is a square in $\mathbb F_q$.
If $E\subseteq \mathbb F_q$ and $|E|\ge C q^{\frac{2}{3}}$ for some large constant $C$, then $|\Box(E)|\ge q/2$.
\end{theorem}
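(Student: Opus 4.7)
The plan is to reinterpret the one-dimensional Box distance set as a generalized two-dimensional distance set between a line and an arbitrary planar set, and then apply Corollary \ref{mainthmC}. In $\mathbb{F}_q$, we have $\|x-y\|=(x-y)^2$, so
\[
\Box(E)=\{(x-y)^2+(x-z)^2 : x,y,z\in E,\ y\neq z\}.
\]
The key observation is that $(x-y)^2+(x-z)^2 = \|(x,x)-(y,z)\|$ in $\mathbb{F}_q^2$. So if we define
\[
A=\{(x,x): x\in E\}\subseteq L_1 \qquad \text{and}\qquad B=\{(y,z)\in \mathbb{F}_q^2: y,z\in E,\ y\neq z\},
\]
then $\Box(E)=\Delta(A,B)$, where $L_1$ denotes the line $x_2=x_1$ of Definition \ref{DefTL}.

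Next, I would verify the hypotheses of Corollary \ref{mainthmC} for $\lambda=1$ and $(a,b)=(0,0)$. Since $2$ is a square in $\mathbb{F}_q$, we have $\eta(1+\lambda^2)=\eta(2)=1$, so $L_1$ is a $1$-coordinatable plane (this is exactly the condition invoked from Lemma \ref{1Rotation}). Thus $A\subseteq L_1$ is a $1$-coordinatable set, and $B\subseteq \mathbb{F}_q^2$ is arbitrary. It remains to check the cardinality condition $|A||B|\ge 2q^2$. We have $|A|=|E|$ and $|B|=|E|(|E|-1)\ge |E|^2/2$ whenever $|E|\ge 2$, so
\[
|A||B|\ \ge\ \frac{|E|^3}{2}.
\]
Hence $|A||B|\ge 2q^2$ as long as $|E|^3\ge 4q^2$, which is guaranteed by $|E|\ge Cq^{2/3}$ for any constant $C\ge 4^{1/3}$.

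Finally, applying Corollary \ref{mainthmC} to this pair $(A,B)$ yields $|\Box(E)|=|\Delta(A,B)|\ge q/2$, completing the proof. There is no real obstacle here beyond spotting the reformulation; the proof is essentially a one-line application of Corollary \ref{mainthmC} once one recognizes that the Box distance in one dimension is literally a distance in two dimensions from the diagonal line $L_1$, and that the squareness of $2$ is exactly the condition making $L_1$ coordinatable so that the machinery applies.
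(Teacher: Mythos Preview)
Your proof is correct and follows essentially the same approach as the paper: both recognize that $\|x-y\|+\|x-z\|=\|(x,x)-(y,z)\|$, place the diagonal $\{(x,x):x\in E\}$ on the line $L_1$, use $\eta(2)=1$ to see that $L_1$ is $1$-coordinatable, and then invoke the main result. The only cosmetic differences are that the paper partitions $E=E_1\cup E_2$ and uses $E_1\times E_2$ to enforce $y\neq z$ (obtaining only $\Box(E)\supseteq\Delta(A,B)$), whereas you take the off-diagonal of $E\times E$ directly and get equality; and your labels $A,B$ are swapped relative to the statement of Corollary~\ref{mainthmC}, which is harmless since $\Delta(A,B)=\Delta(B,A)$ (or one may cite Theorem~\ref{mainthm} directly).
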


The remaining sections of this paper are organized to prove the results stated in the introduction:
Proposition \ref{P1.1}, Theorem \ref{mainthm},  Proposition \ref{ProK}, and Theorem \ref{ThmK}.

\section{Construction of sets for the Erd\H{o}s-Falconer distance conjecture}
We prove Proposition \ref{P1.1}, which shows that in odd dimensions $d$, no threshold result for the Erd\H{o}s-Falconer distance problem can exist below $(d+1)/2$, and in even dimensions $d$, below $d/2$. Our proof follows the argument that is implicitly contained in \cite{KR24}.

For the reader's convenience, we begin by reviewing the canonical forms of non-degenerate quadratic forms, as summarized in \cite{KR24}. 
Let \( P(x):=\|x\| \in \mathbb{F}_q[x_1, \ldots, x_d] \) be a non-degenerate quadratic form. Then we can write
\[
P(x) = x^\top \mathbf{I}_d ~x,
\]
where $\mathbf{I}_d$ denotes the $d\times d$ identity matrix.  
If we apply a linear change of variables by setting \( x = \mathbf{N}y \), where \( \mathbf{N}\) is an invertible \( d \times d \) matrix, and let $\mathbf{M}=\mathbf{N}^{\top}\mathbf{I}_d\mathbf{N}$, then the form transforms as follows:
\[
Q(y) =  y^\top \mathbf{M} y.
\]
In this case, we say that \( P(x) \) is equivalent to \( Q(y) \). Moreover, it is straightforward to observe that
\[
\Delta(A) = \Delta_{Q}(A'):=\{Q(y-y'): y, y'\in A'\},
\]
where \( A \subseteq \mathbb{F}_q^d \) and \( A' := \{N^{-1} x : x \in A\} \). Additionally, note that \( |A| = |A'| \), and the size of any set \( A \) is primarily considered as a hypothesis for distance-type problems. Therefore, without loss of generality, in the proof of Proposition  \ref{P1.1}, we may assume any non-degenerate quadratic form \( Q(x) \), equivalent to \( \|x\|\), to serve as a distance set.

The following non-degenerate quadratic form is a standard equivalent representation of $\|x\|$ (for a more general result, see Theorem 1 in \cite{AM16} or page 79 in \cite{Gr02}).

\begin{itemize}
\item[(i)] When \( d \) is even, the form \( \|x\| \) is equivalent to  
\begin{equation}\label{evenQ}  
Q(x)=\|x\|_Q := \sum_{i=1}^{d-1} (-1)^{i+1} x_i^2 - \varepsilon x_d^2 = x_1^2 - x_2^2 + \cdots + x_{d-1}^2 - \varepsilon x_d^2,  
\end{equation}  
where \( \varepsilon \in \mathbb{F}_q^* \) is chosen such that \( \eta((-1)^{\frac{d}{2}} \varepsilon) = 1 \).

\item[(ii)] When \( d \) is odd, the form \( \|x\| \) is equivalent to  
\begin{equation}\label{oddQ}  
Q(x)=\|x\|_Q := \sum_{i=1}^{d-1} (-1)^{i+1} x_i^2 + \varepsilon x_d^2 = x_1^2 - x_2^2 + \cdots + x_{d-2}^2 - x_{d-1}^2 + \varepsilon x_d^2,  
\end{equation}
where \( \varepsilon \in \mathbb{F}_q^* \) is chosen such that \( \eta((-1)^{\frac{d-1}{2}} \varepsilon) = 1 \).
\end{itemize}

\begin{remark} In both \eqref{evenQ} and \eqref{oddQ}, the parameter \( \varepsilon \) can be chosen to depend only on the dimension \( d \). Indeed, if $d\equiv 0 \pmod{4}$ or $d\equiv 1 \pmod{4}$, then \( \varepsilon \) can be chosen to be 1; otherwise, it can be taken to be \(-1\).
\end{remark}
We also need the following fact.

\begin{lemma}\label{lemCon} For each $0<\delta<1$, there exists a set $\Omega_\delta\subseteq \mathbb F_q$ such that
$|\Omega_\delta|\sim |\Omega_\delta -\Omega_\delta|\sim  q^{1-\delta}$.
\end{lemma}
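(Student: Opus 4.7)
The plan is to construct $\Omega_\delta$ explicitly by combining an $\mathbb{F}_p$-vector subspace of $\mathbb{F}_q$ (which is automatically closed under subtraction, and therefore has trivial difference set) with a short arithmetic progression along one additional coordinate direction, used to fine-tune the cardinality to within a constant factor of the target $q^{1-\delta}$.

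Write $q = p^n$ and fix an $\mathbb{F}_p$-basis $e_1, \ldots, e_n$ of $\mathbb{F}_q$. Let $k$ be the largest integer with $p^k \le q^{1-\delta}$, so that $q^{1-\delta} < p^{k+1}$ and in particular $M \coloneqq \lceil q^{1-\delta}/p^k\rceil \le p$. If $M \le (p+1)/2$, put
\[
\Omega_\delta = \Bigl\{\sum_{i=1}^{k} a_i e_i + a_{k+1} e_{k+1} \;:\; a_1,\ldots,a_k \in \mathbb{F}_p,\; a_{k+1}\in \{0, 1,\ldots,M-1\}\Bigr\};
\]
otherwise (when $M > (p+1)/2$), simply take $\Omega_\delta = \operatorname{span}_{\mathbb{F}_p}(e_1, \ldots, e_{k+1})$, a $(k+1)$-dimensional $\mathbb{F}_p$-subspace. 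In the first case, the $\mathbb{F}_p$-linear independence of $e_1,\ldots,e_{k+1}$ together with $2M-1\le p$ (so that $\{-(M-1),\ldots,M-1\}$ injects into $\mathbb{F}_p$) immediately gives $|\Omega_\delta|=p^k M$ and $|\Omega_\delta-\Omega_\delta|=p^k(2M-1)$; in the second case, being a subspace, $\Omega_\delta - \Omega_\delta = \Omega_\delta$ has exactly $p^{k+1}$ elements.

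A direct estimate in each case yields $|\Omega_\delta|,\,|\Omega_\delta-\Omega_\delta| \in [q^{1-\delta},\, 4q^{1-\delta}]$, which is exactly the desired $|\Omega_\delta|\sim|\Omega_\delta-\Omega_\delta|\sim q^{1-\delta}$. The main obstacle handled by this construction is tuning the cardinality to within an absolute constant factor of $q^{1-\delta}$ uniformly across all odd prime powers $q$: using only an $\mathbb{F}_p$-subspace, the achievable sizes are powers of $p$ and can miss $q^{1-\delta}$ by up to a factor of $p$; splicing in the extra AP of length $M$ in the $(k+1)$-st coordinate is what closes that gap while preserving the small-doubling property needed for the conclusion.
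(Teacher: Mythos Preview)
Your construction is correct and takes a genuinely different route from the paper's. The paper writes $q=p^{\ell}$, identifies $\mathbb{F}_q$ with $\mathbb{F}_p^{\ell}$ via a basis $\{1,\xi,\ldots,\xi^{\ell-1}\}$, and takes $\Omega_\delta$ to be the product set $C_\delta^{\ell}$ where $C_\delta\subseteq\mathbb{F}_p$ is a single arithmetic progression of length $\sim p^{1-\delta}$; then $|\Omega_\delta|=|C_\delta|^{\ell}\sim q^{1-\delta}$ and $|\Omega_\delta-\Omega_\delta|=|C_\delta-C_\delta|^{\ell}$. You instead fill $k$ coordinates with all of $\mathbb{F}_p$ (an $\mathbb{F}_p$-subspace, contributing nothing to the doubling) and use a short progression in just one further coordinate to hit the target size, with the case split handling the situation where that progression would need to wrap around $\mathbb{F}_p$, in which case you simply take the full $(k+1)$-dimensional subspace.

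The paper's argument is shorter and more symmetric across coordinates, but its doubling constant is of order $(|C_\delta-C_\delta|/|C_\delta|)^{\ell}\approx 2^{\ell}$, so the implied constants in $|\Omega_\delta-\Omega_\delta|\sim|\Omega_\delta|$ are uniform in $q$ only when the degree $\ell$ is treated as bounded. Your construction confines the doubling to a single coordinate, giving an explicit doubling constant at most $2$ and the range $[q^{1-\delta},4q^{1-\delta}]$ with absolute constants genuinely independent of $q=p^{\ell}$; this is a small but real robustness gain.
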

\begin{proof}[Proof of Lemma \ref{lemCon}] 
Let \( q = p^{\ell} \) for some integer \( \ell \geq 1 \), where \( p \) is an odd prime. Then the finite field \( \mathbb{F}_q \) can be regarded as an \( \ell \)-dimensional vector space over \( \mathbb{F}_p \), with a basis given by \( \{1, \xi, \xi^2, \ldots, \xi^{\ell - 1}\} \), where \( \xi \) is an algebraic element over \( \mathbb{F}_p \) of degree \( \ell \).
For any \( 0 < \delta < 1 \), let $C_\delta \subseteq \mathbb F_p$ denote an arithmetic progression with $|C_\delta|\sim p^{1-\delta}$. Notice that $|C_\delta- C_\delta|\sim |C_\delta|$. Now for each $0<\delta<1$, we define a set 
$$\Omega_\delta:=\left\{c_0+c_1\xi+\dots+c_{\ell-1}\xi^{\ell-1}: c_0,\dots,c_{\ell-1}\in C_{\delta}\right\}\subseteq \mathbb{F}_q.$$
It is clear that $|\Omega_\delta|=|C_\delta|^\ell\sim  p^{\ell(1-\delta)}=q^{1-\delta}$. Furthermore, since the difference set of $\Omega_\delta$ is defined as
$$\Omega_{\delta}-\Omega_{\delta}:=\left\{(c_0-c_0')+(c_1-c_1')\xi+\dots+(c_{\ell-1}-c_{\ell-1}')\xi^{\ell-1}: c_0,c_0',\dots,c_{\ell-1},c_{\ell-1}'\in C_{\delta}\right\},$$ 
we see that $|\Omega_{\delta}-\Omega_{\delta}|=|C_{\delta}-C_{\delta}|^{\ell}\sim |C_{\delta}|^{\ell}\sim q^{1-\delta}$, which completes the proof.
\end{proof}

\subsection{Proof of Proposition~\ref{P1.1}, part~(i)}
Suppose $d\ge 3$ is odd. Then, by \eqref{oddQ}, instead of the usual distance set $\Delta(A)$ we may consider the following distance set
$$ \Delta_Q(A):=\{\|x-y\|_Q: x, y\in A\},$$
where $\|x-y\|_Q=(x_1-y_1)^2 -(x_2-y_2)^2 + \cdots+ (x_{d-2}-y_{d-2})^2- (x_{d-1}-y_{d-1})^2 +\varepsilon (x_d-y_d)^2$.
For any $0<\delta<1$, define 
$A=H\times \Omega_\delta\subseteq \mathbb F_q^{d-1}\times \mathbb F_q$,
where $\Omega_\delta$ is the set given as in Lemma \ref{lemCon} and the set $H\subseteq \mathbb F_q^{d-1}$ is defined by
$$ H:=\left\{(t_1, t_1, \ldots, t_i, t_i, \ldots, t_{(d-1)/2}, t_{(d-1)/2}) \in \mathbb F_q^{d-1}: t_i\in \mathbb F_q,  1\le i\le (d-1)/2\right\}.$$
Then it is not hard to see that
$$|A|=|H||\Omega_\delta|\sim q^{(d-1)/2} q^{1-\delta} =q^{\frac{d+1}{2}-\delta},$$
and from Lemma \ref{lemCon}, we have 
$$|\Delta_Q(A)|=|\{\varepsilon (a-b)^2: a, b\in \Omega_\delta\}|\sim |\Omega_\delta-\Omega_\delta|\sim q^{1-\delta}.$$
This concludes the proof of part (i).

\subsection{Proof of Proposition~\ref{P1.1}, part~(ii)}
Let the dimension $d\ge 2$ be even.  By \eqref{evenQ}, we may consider the distance set 
$$ \Delta_Q(A):=\{\|x-y\|_Q: x, y\in A\},$$
where $\|x-y\|_Q=(x_1-y_1)^2 -(x_2-y_2)^2 + \cdots+ (x_{d-2}-y_{d-1})^2 -\varepsilon (x_d-y_d)^2$. Now for any $0<\delta<1$, we set
$$ A=\Lambda \times \Omega_\delta\times \{0\} \subseteq \mathbb F_q^{d-2}\times \mathbb F_q \times \mathbb F_q,$$
where the set $\Omega_\delta\subseteq \mathbb F_q$ is defined as in Lemma \ref{lemCon} and the set $\Lambda \subseteq \mathbb F_q^{d-2}$ is defined as
$$ \Lambda=\left\{(t_1, t_1, \ldots, t_i, t_i, \ldots, t_{(d-2)/2}, t_{(d-2)/2}) \in \mathbb F_q^{d-2}:  t_i\in \mathbb F_q,  1\le i\le (d-2)/2\right\}.$$
Then one can easily check that $|A|=|\Lambda||\Omega_\delta|\sim q^{\frac{d-2}{2}} q^{1-\delta}= q^{\frac{d}{2}-\delta}$ and 
$$|\Delta_Q(A)| =\{(a-b)^2: a, b\in \Omega_\delta\}|\sim |\Omega_\delta -\Omega_\delta|\sim |\Omega_\delta|\sim q^{1-\delta},$$ as required.

\section{Preliminaries}

In this section, we record the results that will be used in the proofs of our main results, Theorem~\ref{mainthm} and Theorem~\ref{ThmK}. To this end, we begin by reviewing background material on discrete Fourier analysis and Gauss sums, which serve as key tools in this paper and can be found in \cite{IR07, IK04, LN97}.

\subsection{Foundational Tools}
If \( f: \mathbb{F}_q^d \to \mathbb{C} \) is a function, then its Fourier transform $\widehat{f}:\mathbb{F}_q^d\to \mathbb{C}$ is defined by  
\[
\widehat{f}(m) \coloneqq q^{-d} \sum_{x \in \mathbb{F}_q^d} \chi(-m \cdot x) f(x),
\]  
where \( m \cdot x \) denotes the standard dot product in $\mathbb{F}_q^d$, and $\chi:\mathbb{F}_q\to S^1\subseteq \mathbb{C}$ is a nontrivial additive character.

The most important property of \( \chi \) is its orthogonality relation:
\[
\sum_{x \in \mathbb{F}_q^d} \chi(m \cdot x) =
\begin{cases}
0, & \text{if } m \ne (0, \ldots, 0), \\
q^d, & \text{if } m = (0, \ldots, 0).
\end{cases}
\]
By the orthogonality of \( \chi \) and the definition of the Fourier transform, the following Fourier inversion theorem can be easily derived:
\[
f(x) = \sum_{m \in \mathbb{F}_q^d} \chi(m \cdot x)\, \widehat{f}(m).
\]
By similar reasoning, one can deduce the following important identity, known as the Plancherel theorem:
\[
\sum_{m \in \mathbb{F}_q^d} |\widehat{f}(m)|^2 = q^{-d} \sum_{x \in \mathbb{F}_q^d} |f(x)|^2.
\]
In particular, when the function \( f \) is taken to be the indicator function of a set \( E \subseteq \mathbb{F}_q^d\), the following identity follows immediately from the Plancherel theorem:
\[
\sum_{m \in \mathbb{F}_q^d} |\widehat{E}(m)|^2 = q^{-d} |E|.
\]
Here, and throughout this paper, we identify a set \( E \) with its indicator function \( 1_E \).

For each element \( a \in \mathbb{F}_q^* \), the Gauss sum \( \mathcal{G}_a \) is defined as
\[
\mathcal{G}_a := \sum_{s \in \mathbb{F}_q^*} \eta(s)\, \chi(as),
\]
where \( \eta \) is the quadratic character on \( \mathbb{F}_q^* \) and \( \chi \) is a nontrivial additive character of $\mathbb{F}_q$.
Alternatively, \( \mathcal{G}_a \) can be also expressed as
\[
\mathcal{G}_a = \sum_{s \in \mathbb{F}_q} \chi(as^2) = \eta(a)\, \mathcal{G}_1,
\]
for all \( a \in \mathbb{F}_q^* \).
By completing the square and performing an appropriate change of variables, one obtains the following identity:
\begin{equation}\label{ComSqu}  
\sum_{s \in \mathbb{F}_q} \chi(as^2 + bs) = \eta(a)\ \chi\left(\frac{b^2}{-4a}\right)\mathcal{G}_1.
\end{equation}
We can easily derive the following result for the Gauss sum:
\begin{equation}\label{Corm}
\mathcal{G}_1^2 = \eta(-1) q.
\end{equation}
Indeed, since the modulus of the Gauss sum \( \mathcal{G}_a \) is exactly \( \sqrt{q} \), it is evident that \( \mathcal{G}_a \overline{\mathcal{G}_a} = |\mathcal{G}_a|^2 = q \) for any \( a \in \mathbb{F}_q^* \). Moreover, by performing a change of variables, we observe that
\[
\overline{\mathcal{G}_a} = \sum_{s \in \mathbb{F}_q^*} \eta(s) \chi(-as) = \eta(-1) \mathcal{G}_a.
\]
Thus, the desired equality in \eqref{Corm} is obtained.

\subsection{Homogeneous varieties in $\mathbb F_q^d \times \mathbb F_q^d$} 
Let $d\ge 2$ be an integer.
For $X=(x, y)\in \mathbb F_q^{d} \times \mathbb F_q^{d}=\mathbb F_q^{2d}$, we define 
\begin{equation}\label{starDef} \|X\|_*:= \|x\|-\|y\|.\end{equation}
\begin{definition} \label{DefV0}  Let $d\ge 2$ be an integer. A homogeneous variety, denoted by $V_0$, is a variety defined by
$$ V_0:=\{X\in \mathbb F_q^{2d}: \|X\|_*=0\}.$$
\end{definition}

The Fourier transform of the homogeneous variety $V_0$ in $\mathbb{F}_q^{2d}$ is explicitly given as follows. For the convenience of readers, the proof given in \cite{CKP21} is included below.

\begin{lemma} [{\cite[Lemma 3.2]{CKP21}}]
\label{defVFT} 
Let $d\ge 2$ be an integer. If $M \in \mathbb{F}_q^{2d}$, then we have:
\[
\widehat{V_0}(M) := q^{-2d} \sum_{X \in V_0} \chi(-M \cdot X) =
\begin{cases}
q^{-1} \delta_0(M) + q^{-d-1}(q - 1), & \text{if } \|M\|_* = 0, \\
- q^{-d-1}, & \text{if } \|M\|_* \ne 0,
\end{cases}
\]
where $\delta_0(M)=1$ if $M=(0,\ldots, 0)$ and $0$ otherwise.
\end{lemma}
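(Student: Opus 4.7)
The plan is to express the indicator function $\mathbf{1}_{V_0}$ through additive-character orthogonality and then diagonalize the resulting quadratic exponential sum using the square-completion identity \eqref{ComSqu}. Concretely, I would start from
\[
\mathbf{1}_{V_0}(X) = q^{-1} \sum_{t \in \mathbb{F}_q} \chi(t\|X\|_*),
\]
so that
\[
\widehat{V_0}(M) = q^{-2d-1} \sum_{t \in \mathbb{F}_q} \sum_{X \in \mathbb{F}_q^{2d}} \chi\bigl(t\|X\|_* - M \cdot X\bigr).
\]
The contribution of $t = 0$ collapses by orthogonality of $\chi$ to exactly $q^{-1}\delta_0(M)$, so the remaining task is to evaluate the $t \ne 0$ piece.

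For fixed $t \ne 0$, writing $M = (m_1, m_2)$ and $X = (x, y)$, the exponent $t\|X\|_* - M\cdot X$ separates across the $2d$ coordinates into $d$ copies of $\sum_{s \in \mathbb{F}_q} \chi(ts^2 - m_{1,i} s)$ and $d$ copies of $\sum_{s \in \mathbb{F}_q} \chi(-ts^2 - m_{2,i} s)$. Applying \eqref{ComSqu} coordinate by coordinate, each such one-variable sum evaluates to a quadratic-character factor times a linear exponential in $1/t$ times $\mathcal{G}_1$. Multiplying over all $2d$ coordinates, the quadratic characters collapse via $\eta(t)^d \eta(-t)^d = \eta(-1)^d$, while the Gauss-sum factor contributes $\mathcal{G}_1^{2d} = \eta(-1)^d q^d$ by \eqref{Corm}. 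The linear exponentials combine into $\chi\!\left(-\|M\|_*/(4t)\right)$, so the $t \ne 0$ contribution to $\widehat{V_0}(M)$ reduces to
\[
q^{-d-1} \sum_{t \in \mathbb{F}_q^*} \chi\!\left(\frac{-\|M\|_*}{4t}\right).
\]

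The final step is a one-variable orthogonality argument. If $\|M\|_* = 0$, the inner character is constantly $1$ and the sum contributes $q^{-d-1}(q-1)$; if $\|M\|_* \ne 0$, the substitution $u = -\|M\|_*/(4t)$ is a bijection of $\mathbb{F}_q^*$ onto itself, so $\sum_{u \in \mathbb{F}_q^*} \chi(u) = -1$ yields $-q^{-d-1}$. Adding back the $t = 0$ contribution $q^{-1}\delta_0(M)$, and noting that $\|M\|_* \ne 0$ forces $\delta_0(M) = 0$, produces exactly the two-case formula in the statement. I do not anticipate a genuine obstacle here; the only real bookkeeping pitfall is tracking the two separate $\eta(-1)^d$ factors arising from the quadratic characters and from $\mathcal{G}_1^{2d}$, which must cancel to leave a clean $q^d$ before the final character sum in $t$ is evaluated.
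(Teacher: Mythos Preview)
Your proposal is correct and follows essentially the same route as the paper's proof: both detect $V_0$ via additive-character orthogonality, split off the $t=0$ term as $q^{-1}\delta_0(M)$, apply the square-completion identity \eqref{ComSqu} coordinatewise to collapse the $2d$-variable sum to $\mathcal{G}_1^{2d}\eta(-1)^d\chi(-\|M\|_*/(4t))$, use $\mathcal{G}_1^{2d}\eta(-1)^d=q^d$ from \eqref{Corm}, and finish with one-variable orthogonality in $t$. Your bookkeeping of the two $\eta(-1)^d$ factors is exactly the point the paper handles by writing $\eta^d(s)\eta^d(-s)=\eta^d(-1)$ and then invoking $\mathcal{G}_1^{2d}\eta^d(-1)=q^d$.
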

\begin{proof}[Proof of Lemma \ref{defVFT}]
It follows from the orthogonality of $\chi$ that  
\[
\widehat{V_0}(M) = q^{-2d} \sum_{X \in V_0} \chi(-M \cdot X) = q^{-1} \delta_0(M) + q^{-2d-1} \sum_{X \in \mathbb{F}_q^{2d}} \sum_{s \ne 0} \chi(s\|X\|_* - M \cdot X).
\]

Applying the complete square formula~\eqref{ComSqu}, we obtain  
\[
\widehat{V_0}(M) = q^{-1} \delta_0(M) + q^{-2d-1}  \mathcal{G}_1 ^{2d} \sum_{s \ne 0} \eta^{d}(s)\eta^{d}(-s) \chi\left( \frac{\|M\|_*}{-4s} \right).
\]

Since $\eta^{2d} = 1$, a change of variables yields  
\[
\widehat{V_0}(M) = q^{-1} \delta_0(M) + q^{-2d-1} \mathcal{G}_1^{2d} \eta^d(-1)\sum_{r \ne 0} \chi(r\|M\|_*).
\]

Notice from \eqref{Corm} that $\mathcal{G}_1^{2d} \eta^d(-1)=q^d$.
By the orthogonality of $\chi$, we finish the proof.
\end{proof} 
\section{Key Lemmas}
In this section, we present important facts that will be directly applied in proving the main results of this paper.
\subsection{A lower bound for the size of $\Delta(A, B)$}
We derive a formula for a lower bound of the distance set \( \Delta(A, B) \), expressed from the perspective of the restriction estimate for the sphere $S_t^{d-1}:=\{x\in \mathbb F_q^d: \|x\|=t\}$ for $t\in \mathbb F_q$.
Although a similar formula has been previously derived, our proof will yield a simpler and more refined result by applying Lemma \ref{defVFT}  instead of computing the Fourier transform on spheres. More precisely, we obtain the following formula.

\begin{lemma} \label{DistFormula} Let $A$, $B\subseteq \mathbb F_q^d$. Then we have
$$ |\Delta(A, B)| \ge \frac{|A|^2|B|^2}{ q^{-1}|A|^2 |B|^2 + q^{2d} |A|  \Bigg(\max\limits_{t\in \mathbb F_q} \sum\limits_{m\in S_t^{d-1}} |\widehat{B}(m)|^2\Bigg)}.$$
\end{lemma}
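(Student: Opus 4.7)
The first move is Cauchy--Schwarz on the distance counting function $\nu(t):=|\{(x,y)\in A\times B:\|x-y\|=t\}|$. Since $\nu$ is supported on $\Delta(A,B)$ and totals $|A||B|$, one obtains $|A|^2|B|^2 \le |\Delta(A,B)|\sum_t\nu(t)^2$, reducing the lemma to the upper bound
\begin{equation*}
\sum_t\nu(t)^2 \;\le\; q^{-1}|A|^2|B|^2 \;+\; q^{2d}|A|\max_t\sum_{m\in S_t^{d-1}}|\widehat{B}(m)|^2.
\end{equation*}

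To prove this, I would introduce the difference-counting function $r(u):=|\{(x,y)\in A\times B: x-y=u\}|$, so that $\nu(t)=\sum_{u\in S_t^{d-1}}r(u)$ and therefore $\sum_t\nu(t)^2 = \sum_{(u,u')\in\mathbb F_q^{2d}}V_0(u,u')\,r(u)r(u')$. Applying Plancherel to the pairing of $V_0$ against $r\otimes r$ and inserting the three-term formula for $\widehat{V_0}$ from Lemma~\ref{defVFT} splits the right-hand side into three contributions. The $\delta_0$ piece produces the main term $q^{-1}|A|^2|B|^2$ (using $\widehat{r}(0)=q^{-d}|A||B|$); the constant $-q^{-d-1}$ piece produces $-q^{d-1}|A\cap B|^2\le 0$, which I simply discard; and the spherical piece $q^{-d}\mathbf{1}[\|m\|=\|m'\|]$ produces $q^d\sum_t\sigma(t)^2$ with $\sigma(t):=\sum_{m\in S_t^{d-1}}\widehat{r}(m)$.

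Controlling the spherical contribution is the heart of the argument. First I would verify that $\sigma(t)$ is real, using $\overline{\widehat{r}(m)}=\widehat{r}(-m)$ (since $r$ is real-valued) together with the symmetry $S_t^{d-1}=-S_t^{d-1}$; this gives $\sigma(t)^2=|\sigma(t)|^2$. Writing $\widehat{r}(m)=q^d\widehat{A}(m)\overline{\widehat{B}(m)}$ and applying Cauchy--Schwarz on each sphere yields
\begin{equation*}
\sigma(t)^2 \;\le\; q^{2d}\Bigl(\sum_{m\in S_t^{d-1}}|\widehat{A}(m)|^2\Bigr)\Bigl(\sum_{m\in S_t^{d-1}}|\widehat{B}(m)|^2\Bigr).
\end{equation*}
Pulling out the maximum of the $B$-sum in $t$ and collapsing the $A$-sum via the partition of $\mathbb F_q^d$ into spheres together with Plancherel, $\sum_t\sum_{m\in S_t^{d-1}}|\widehat{A}(m)|^2=\sum_m|\widehat{A}(m)|^2=q^{-d}|A|$, delivers $q^d\sum_t\sigma(t)^2\le q^{2d}|A|\max_t\sum_{m\in S_t^{d-1}}|\widehat{B}(m)|^2$, completing the reduction.

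The only subtle point is verifying $\sigma(t)\in\mathbb R$, since without it $\sigma(t)^2$ cannot be replaced by $|\sigma(t)|^2$ and the clean non-negative splitting into the three pieces from Lemma~\ref{defVFT} collapses; everything else is a mechanical combination of Plancherel, Cauchy--Schwarz, and the explicit Fourier formula for $V_0$. It is also worth noting that the negative term $-q^{d-1}|A\cap B|^2$ happens to work in our favour, so no refined control of $|A\cap B|$ is needed.
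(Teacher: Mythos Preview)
Your proposal is correct and follows essentially the same route as the paper: both express $\sum_t\nu(t)^2$ as a pairing against $V_0$ in $\mathbb F_q^{2d}$, invoke the three-term formula of Lemma~\ref{defVFT}, discard the negative constant piece (which in both versions equals $-q^{d-1}|A\cap B|^2$), and finish with Cauchy--Schwarz on each sphere followed by Plancherel for $A$. The only cosmetic differences are that you package the computation through the representation function $r$ rather than through $A\times A$ and $B\times B$ directly, and you are more explicit than the paper about why the spherical sums $\sigma(t)$ are real (a point the paper uses tacitly when writing $(\cdot)^2$ and then applying Cauchy--Schwarz).
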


\begin{proof}[Proof of Lemma \ref{DistFormula}] 
We begin by using a well-known counting method. For \( A, B \subseteq \mathbb{F}_q^d \) and \( t \in \mathbb{F}_q \), we define the counting function \( \nu(t) \) as follows: $$\nu(t)\coloneqq |\{(x,y)\in A\times B: \lVert x-y\rVert=t\}|.$$
As is standard, the Cauchy--Schwarz inequality implies the following lower bound:
\[
|\Delta(A, B)| \ge \frac{|A|^2 |B|^2}{\sum\limits_{t \in \mathbb{F}_q} \nu^2(t)}.
\]
Therefore, to complete the proof,  it suffices to prove that 
\begin{equation}\label{FinalAim}
\sum_{t \in \mathbb{F}_q} \nu^2(t) \le q^{-1}|A|^2 |B|^2 + q^{2d} |A|  \Bigg(\max_{t\in \mathbb F_q} \sum\limits_{m\in S_t^{d-1}} |\widehat{B}(m)|^2\Bigg).
\end{equation}
By the definition of $\nu(t)$, we have
$$\sum\limits_{t \in \mathbb{F}_q} \nu^2(t)=\sum_{\substack{x, z\in A, y, w\in B\\ \|x-y\|-\|z-w\|=0}}  1.$$
Let $X=(x,z)\in A\times A$ and $Y=(y,w)\in B\times B$. Then, by the definition of $\|\cdot \|_*$ in \eqref{starDef}, we have
$$\sum_{t \in \mathbb{F}_q} \nu^2(t)= \sum_{\substack{X\in A\times A, Y\in B\times B\\ \|X-Y\|_*=0}} 1=\sum_{\substack{X\in A\times A\\ Y\in B\times B}} V_0(X-Y),$$ 
where $V_0$ is the variety in $\mathbb F_q^{2d}$, defined in Definition \ref{DefV0}.
Applying the Fourier inversion theorem to the function $V_0(X-Y)$, it follows from the definition of the Fourier transform that 
$$\sum_{t \in \mathbb{F}_q} \nu^2(t)=q^{4d} \sum_{M\in \mathbb F_q^{2d}}   \widehat{V_0}(M)~ \overline{\widehat{A\times A}}(M) ~ \widehat{B\times B}(M).$$
After substituting the explicit expression for $\widehat{V_0}(M)$ given in Lemma \ref{defVFT} and simplifying via direct computation, we obtain the following equations:
\begin{align*}
\sum_{t \in \mathbb{F}_q} \nu^2(t)&=q^{4d-1} \overline{\widehat{A\times A}}(0, \ldots, 0)  ~  \widehat{B\times B}(0, \ldots, 0) + q^{3d-1}(q-1)\sum_{\substack{M\in \mathbb F_q^{2d}\\ \|M\|_*=0}}\overline{\widehat{A\times A}}(M)  ~  \widehat{B\times B}(M)\\
& -q^{3d-1} \sum_{\substack{M\in \mathbb F_q^{2d}\\ \|M\|_*\ne 0}}\overline{\widehat{A\times A}}(M)  ~  \widehat{B\times B}(M) \\
&=  \frac{|A|^2|B|^2}{q} + q^{3d} \sum_{\substack{M\in \mathbb F_q^{2d}\\ \|M\|_*=0}} \overline{\widehat{A\times A}}(M)  \widehat{B\times B}(M)-q^{3d-1} \sum_{M\in \mathbb F_q^{2d}}\overline{\widehat{A\times A}}(M)  ~  \widehat{B\times B}(M).\\
\end{align*}

For $M=(m, m')\in \mathbb F_q^d \times \mathbb F_q^d, $  notice that  $\overline{\widehat{A\times A}}(M) \widehat{B\times B}(M)= \overline{\widehat{A}}(m) \overline{\widehat{A}}(m') \widehat{B}(m) \widehat{B}(m') $  and
$$\sum\limits_{\substack{M\in \mathbb F_q^{2d}\\ \|M\|_*=0}} = \sum\limits_{\substack{m, m'\in \mathbb F_q^d\\ \|m\|=\|m'\|}} =\sum\limits_{t\in \mathbb F_q} \sum\limits_{\substack{m\in \mathbb F_q^d\\ \|m\|=t}} \sum\limits_{\substack{m'\in \mathbb F_q^d\\ \|m'\|=t}}.$$ 

Then we see that
\begin{align*}\sum_{t \in \mathbb{F}_q} \nu^2(t)&\le \frac{|A|^2|B|^2}{q} + q^{3d}\sum_{t\in \mathbb F_q} \left(\sum_{m\in S_t^{d-1}} \overline{\widehat{A}}(m)\widehat{B}(m)\right)^2 -q^{3d-1} \left( \sum_{m\in \mathbb F_q^d} \overline{\widehat{A}}(m)\widehat{B}(m)\right)^2\\
&\le \frac{|A|^2|B|^2}{q} + q^{3d}\sum_{t\in \mathbb F_q} \left(\sum_{m\in S_t^{d-1}} \overline{\widehat{A}}(m)\widehat{B}(m)\right)^2,\end{align*}
where the last inequality follows since the third term above is a non-negative real number.
By the Cauchy-Schwarz inequality, we see that
\begin{align*}
\sum_{t\in \mathbb F_q} \nu^2(t) &\le \frac{|A|^2|B|^2}{q} + q^{3d} \sum_{t\in \mathbb F_q} \left(\sum_{m\in S_t^{d-1}} |\widehat{A}(m)|^2\right) \left(\sum_{m\in S_t^{d-1}} |\widehat{B}(m)|^2\right)\\
& \le \frac{|A|^2|B|^2}{q} + q^{3d} \left(\max_{t\in \mathbb F_q} \sum_{m\in S_t^{d-1}} |\widehat{B}(m)|^2\right) \sum_{m\in \mathbb F_q^d} |\widehat{A}(m)|^2.
\end{align*}
Since $\sum\limits_{m\in \mathbb F_q^d} |\widehat{A}(m)|^2 = q^{-d}|A|$, inequality \eqref{FinalAim} follows, completing the proof.
\end{proof}

\subsection{$L^2$ restriction estimates for spheres}
We investigate \( L^2 \) restriction estimates for spheres in \( \mathbb{F}_q^d \), with test functions given by indicator functions of \( k \)-coordinatable sets. To this end, we need an explicit cardinality of the sphere $S_t^{d-1}$ in $\mathbb F_q^d$. Here we recall that for $t\in \mathbb F_q$ we define a sphere with radius $t$ as follows:
$$S_t^{d-1}:=\{x\in \mathbb F_q^d: \|x\|=t\}.$$

\begin{lemma}[{\cite[Theorems 6.26 and 6.27]{LN97}}] \label{SphereSize}
For $t\in \mathbb F_q$, we define $\omega(t)=-1$ for $t\ne 0$, and $\omega(0)=q-1$. 
\begin{itemize} \item[(i)] If $d$ is even, then  
$$\big|S_t^{d-1}\big|=q^{d-1}+ \omega(t) q^{\frac{d-2}{2}} \eta\left((-1)^{\frac{d}{2}}\right).$$
\item[(ii)] If $d$ is odd, then
$$\big|S_t^{d-1}\big|=q^{d-1}+ q^{\frac{d-1}{2}} \eta\left(t(-1)^{\frac{d-1}{2}} \right).$$ 
\end{itemize}
\end{lemma}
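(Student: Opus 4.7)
The plan is to evaluate $|S_t^{d-1}|$ by expanding the indicator of $\|x\|=t$ via the orthogonality relation for the nontrivial additive character $\chi$, and then reduce the resulting exponential sum to a product of one-dimensional Gauss sums. Writing
$$|S_t^{d-1}| = q^{-1}\sum_{s\in\mathbb F_q}\sum_{x\in\mathbb F_q^d}\chi(s(\|x\|-t)) = q^{d-1} + q^{-1}\sum_{s\ne 0}\chi(-st)\prod_{i=1}^d\sum_{x_i\in\mathbb F_q}\chi(sx_i^2),$$
each inner sum is $\mathcal G_s = \eta(s)\mathcal G_1$, so the product over $i$ collapses to $\eta(s)^d\mathcal G_1^d$. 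The $s=0$ contribution yields the main term $q^{d-1}$, and the task reduces to simplifying the remaining character sum using the key identity $\mathcal G_1^2 = \eta(-1)q$ from \eqref{Corm}.

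For $d$ even, $\eta^d\equiv 1$ and $\mathcal G_1^d = (\mathcal G_1^2)^{d/2} = q^{d/2}\eta((-1)^{d/2})$, so the error term is
$$q^{-1}q^{d/2}\eta((-1)^{d/2})\sum_{s\ne 0}\chi(-st).$$
The inner geometric sum equals $q-1$ when $t=0$ and $-1$ otherwise, which is precisely $\omega(t)$; this yields part (i).

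For $d$ odd, $\eta^d = \eta$ and $\mathcal G_1^d = q^{(d-1)/2}\eta((-1)^{(d-1)/2})\mathcal G_1$, so the error term becomes
$$q^{-1}q^{(d-1)/2}\eta((-1)^{(d-1)/2})\mathcal G_1\sum_{s\ne 0}\eta(s)\chi(-st).$$
When $t=0$ the sum $\sum_{s\ne 0}\eta(s)$ vanishes, giving $|S_0^{d-1}|=q^{d-1}$, consistent with the formula since $\eta(0)=0$ is implicit. When $t\ne 0$ the sum equals $\mathcal G_{-t}=\eta(-t)\mathcal G_1$, and combining with the existing $\mathcal G_1$ produces another $\mathcal G_1^2 = \eta(-1)q$. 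Collecting the quadratic characters via multiplicativity gives $\eta((-1)^{(d-1)/2})\eta(-1)\eta(-t) = \eta(t(-1)^{(d-1)/2})$, matching part (ii).

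The main (and really only) obstacle is the sign bookkeeping in the odd case: one must carefully track the powers of $\eta(-1)$ accumulated from $\mathcal G_1^2$ and from $\eta(-t)$ in order to reach the precise form $\eta(t(-1)^{(d-1)/2})$ stated in the lemma. Everything else is a routine application of character orthogonality and the Gauss sum identities \eqref{ComSqu} and \eqref{Corm} already recorded in the preliminaries.
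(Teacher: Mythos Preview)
Your proof is correct. Note, however, that the paper does not actually prove Lemma~\ref{SphereSize}; it simply quotes the result from Lidl--Niederreiter \cite[Theorems~6.26 and~6.27]{LN97}. Your argument is the standard one and, conveniently, uses precisely the tools the paper has already assembled in its preliminaries (orthogonality of $\chi$, the identity $\sum_{x}\chi(ax^2)=\eta(a)\mathcal G_1$, and $\mathcal G_1^2=\eta(-1)q$), so it fits seamlessly into the paper. One small remark: in the odd case at $t=0$ you invoke the convention $\eta(0)=0$; the paper defines $\eta$ only on $\mathbb F_q^*$, so strictly speaking this extension should be stated explicitly, but it is the intended reading of part~(ii) and is standard.
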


It is clear from Lemma \ref{SphereSize} that
\begin{equation}\label{SizeS} \max_{t\in \mathbb F_q} \big|S_t^{d-1}\big| \le 2 q^{d-1}.
\end{equation}



It follows from Lemma~\ref{DistFormula} that finding a lower bound for the distance set $\Delta(A, B)$ is related to the problem of obtaining an upper bound for the $L^2$ restriction estimate on the sphere, namely
\[  \mathcal{R}_t(B):=
\sum_{m \in S_t^{d-1}} |\widehat{B}(m)|^2.
\]
Since $S_t^{d-1} \subseteq \mathbb F_q^d,$  it obviously follows from the Plancherel theorem that 
$$ \mathcal{R}_t(B) \le q^{-d}|B|.$$
Although various efforts have been made to improve upon this seemingly obvious result, improvements for a general set \( B \)  have only been achieved in two dimensions. More precisely,  the authors in \cite{CEHIK10} showed that for $d=2$ and $t\ne 0$, 
$$ \mathcal{R}_t(B)\lesssim  q^{-3} |B|^{\frac{3}{2}}.$$
Our result below shows that this estimate can be improved in the case when  the set $B\subseteq \mathbb F_q^d$ is a $k$-coordinatable set. 
\begin{proposition} \label{ProRes} For any $k$-coordinatable set $B \subseteq \mathbb F_q^d$ with $1\le k\le d-1$, we have
$$\max_{t\in \mathbb F_q} \sum_{m\in S_t^{d-1}} |\widehat{B}(m)|^2 \le 2 q^{-d-1} |B|.$$ 
\end{proposition}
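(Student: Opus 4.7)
The plan is to reduce, using the invariance of the sphere-restricted $L^2$ norm of $\widehat{B}$ under rotations and translations of $B$, to the case where $B$ is contained in a standard coordinate $k$-plane, and then to exploit the resulting product structure through a slicing of $S_t^{d-1}$ combined with Plancherel applied in dimension $k$.

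First I would record two symmetry facts. For a translation, $\widehat{B+a}(m)=\chi(-m\cdot a)\widehat{B}(m)$, so $|\widehat{B+a}(m)|=|\widehat{B}(m)|$. For a rotation $R \in O_d(\mathbb{F}_q)$, one has $\widehat{RB}(m)=\widehat{B}(R^{\top} m)$, and since $R^{\top}$ preserves $\|\cdot\|$, it restricts to a bijection of each sphere $S_t^{d-1}$. Hence $\sum_{m\in S_t^{d-1}}|\widehat{B}(m)|^2$ is invariant under the symmetries permitted in the definition of a $k$-coordinatable set, and I may therefore assume $B\subseteq\{(x_1,\dots,x_k,0,\dots,0)\in\mathbb{F}_q^d\}$. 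Let $B'\subseteq\mathbb{F}_q^k$ denote the corresponding identification, so $|B'|=|B|$.

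Writing $m=(m',m'')\in\mathbb{F}_q^k\times\mathbb{F}_q^{d-k}$, the fact that $B$ is supported on the first $k$ coordinates yields $\widehat{B}(m)=q^{k-d}\,\widetilde{B'}(m')$, where $\widetilde{B'}(m')=q^{-k}\sum_{x'\in B'}\chi(-m'\cdot x')$ is the Fourier transform of $B'$ on $\mathbb{F}_q^k$. Slicing $S_t^{d-1}$ over $m'$, the number of admissible $m''\in\mathbb{F}_q^{d-k}$ equals $\bigl|S_{t-\|m'\|}^{d-k-1}\bigr|$, which by \eqref{SizeS} applied in ambient dimension $d-k$ (supplemented by the trivial estimate $|S_s^0|\le 2$ in the boundary case $d-k=1$) is at most $2q^{d-k-1}$ uniformly in $s \in \mathbb{F}_q$. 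Therefore
$$\sum_{m\in S_t^{d-1}}|\widehat{B}(m)|^2 \;=\; q^{2(k-d)}\sum_{m'\in\mathbb{F}_q^k}|\widetilde{B'}(m')|^2\,\bigl|S_{t-\|m'\|}^{d-k-1}\bigr| \;\le\; 2q^{k-d-1}\sum_{m'\in\mathbb{F}_q^k}|\widetilde{B'}(m')|^2.$$

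Applying Plancherel in dimension $k$, namely $\sum_{m'\in\mathbb{F}_q^k}|\widetilde{B'}(m')|^2=q^{-k}|B|$, gives the claimed bound $2q^{-d-1}|B|$ after arithmetic with exponents. The calculation itself is routine; the only place that requires genuine care is the symmetry reduction, since it relies on confirming that the translations and rotations coming from the definition of a $k$-coordinatable set really do preserve the sphere-restricted $L^2$ norm of $\widehat{B}$, which the two identities recorded above make straightforward.
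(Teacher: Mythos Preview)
Your proof is correct and follows essentially the same route as the paper: reduce via rotation/translation invariance to $B=B_k\times\{\mathbf{0}\}$, factor $\widehat{B}$ as $q^{k-d}\widehat{B_k}$, slice $S_t^{d-1}$ over the first $k$ coordinates to pick up the count $|S_{t-\|m'\|}^{d-k-1}|\le 2q^{d-k-1}$, and finish with Plancherel in dimension $k$. Your write-up is in fact slightly more careful than the paper's in that you explicitly justify the two symmetry identities and flag the boundary case $d-k=1$.
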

\begin{proof}[Proof of Proposition \ref{ProRes}] Notice that the sphere $S_t^{d-1}$  is invariant under a rotation
and  the absolute value of the Fourier transform on a set $B\subseteq \mathbb F_q^d$ is the same as that on its translation. Hence, without loss of generality, we may assume that the $k$-coorinatable set $B\subseteq \mathbb F_q^d$ is written as
$$ B=B_k \times \{\textbf{0}\},$$
where $B_k$ is a subset of $\mathbb F_q^k$ and  $\mathbf{0}$ denotes the zero vector  in $\mathbb F_q^{d-k}.$

For $B_k\subseteq \mathbb F_q^k$ and the zero vector $\mathbf{0}$ in $\mathbb F_q^{d-k}$, we observe that for $m=(m_1, \ldots, m_k, \mathbf{0}) \in \mathbb F_q^k \times \mathbb F_q^{d-k}$, we have
$$\widehat{B}(m)=\widehat{B_k\times \{\mathbf{0}\}}(m) = \widehat{B_k}(m_1, \ldots, m_k) \widehat{\{\mathbf{0}\}} (\mathbf{0})=\widehat{B_k}(m_1, \ldots, m_k) q^{-d+k}.$$
Hence, for each $t\in \mathbb F_q$, we can write
\begin{align*}
\sum_{m\in S_t^{d-1}} |\widehat{B}(m)|^2 &= \sum_{m_1, m_2,\ldots, m_k\in \mathbb F_q} |\widehat{B_k}(m_1, \ldots, m_k)|^2  \sum_{\substack{m_{k+1}, \ldots, m_d\in \mathbb F_q\\ m_{k+1}^2+ \cdots+ m_d^2=t-m_1^2-\cdots- m_k^2}} |\widehat{\{\mathbf{0}\}} (\mathbf{0})|^2\\
& =\sum_{m_1, m_2,\ldots, m_k\in \mathbb F_q}q^{-2d+2k} |\widehat{B_k}(m_1, \ldots, m_k)|^2 |S_{t-m_1^2-\cdots-m_k^2}^{d-k-1}|.  
\end{align*}
Now applying \eqref{SizeS}  and the Plancherel theorem,  we obtain that
\begin{align*}
\sum_{m\in S_t^{d-1}} |\widehat{B}(m)|^2&\le  \sum_{m_1, m_2,\ldots, m_k\in \mathbb F_q} 2 q^{d-k-1} q^{-2d+2k}|\widehat{B_k}(m_1, \ldots, m_k)|^2  \\ &=2q^{-k}q^{d-k-1} q^{-2d+2k}|B_k|  =2q^{-d-1} |B_k|.
\end{align*}
Since $|B|=|B_k|$, this completes the proof.
\end{proof}

\subsection{1-coordinatable plane in two dimensions}

\begin{lemma}\label{1Rotation}  Let $(a, b)\in \mathbb F_q^2$ and let $\lambda\in \mathbb F_q^*.$ If $\eta(1+\lambda^2)=1,$ then  the line $L_\lambda(a,b)$ is  1-coordinatable  in $\mathbb F_q^2.$ \end{lemma}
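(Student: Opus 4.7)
The plan is to explicitly construct an affine isometry that carries $L_\lambda(a,b)$ onto the $x_1$-axis. Since translations are allowed, I first translate by $(-a,-b)$ to reduce to the line $L_\lambda$ through the origin, which has direction vector $(1,\lambda)$. After that, it only remains to produce a rotation of $\mathbb F_q^2$ sending $(1,\lambda)$ to a nonzero scalar multiple of $(1,0)$.

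Recall that a rotation of $\mathbb F_q^2$ is an orthogonal matrix of determinant $1$; any such matrix has the form
\[
R=\begin{pmatrix}\alpha & -\beta\\ \beta & \alpha\end{pmatrix},\qquad \alpha,\beta\in\mathbb F_q,\ \ \alpha^2+\beta^2=1.
\]
Computing $R(1,\lambda)^{\top}=(\alpha-\beta\lambda,\ \beta+\alpha\lambda)^{\top}$, I see that $R$ sends the direction $(1,\lambda)$ into the $x_1$-axis precisely when $\beta=-\alpha\lambda$. Feeding this back into the orthogonality condition $\alpha^2+\beta^2=1$ yields the single equation
\[
\alpha^{2}(1+\lambda^2)=1.
\]

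The key point is that this equation is solvable in $\mathbb F_q^*$ exactly when $1+\lambda^2$ is a nonzero square, because the set of squares in $\mathbb F_q^*$ is closed under multiplicative inversion. The hypothesis $\eta(1+\lambda^2)=1$ gives both $1+\lambda^2\ne 0$ and its squareness, so I may pick $\alpha\in\mathbb F_q^*$ with $\alpha^2=(1+\lambda^2)^{-1}$ and set $\beta=-\alpha\lambda$. A direct verification then shows that the resulting $R$ is orthogonal with $\det R=\alpha^2+\beta^2=1$, and that $R(1,\lambda)^{\top}=(\alpha(1+\lambda^2),0)^{\top}=(\alpha^{-1},0)^{\top}$. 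Hence the composition of translation by $(-a,-b)$ followed by $R$ sends $L_\lambda(a,b)$ onto the $x_1$-axis, which is a $1$-dimensional coordinate plane, proving $L_\lambda(a,b)$ is $1$-coordinatable.

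There is no real obstacle here beyond unwinding the definition of "rotation" in the finite-field setting: once one writes down the parametric form of an orthogonal matrix in $\mathbb F_q^2$ and imposes the condition of sending $(1,\lambda)$ to the $x_1$-axis, the squareness hypothesis $\eta(1+\lambda^2)=1$ is seen to be exactly what is needed to extract a square root. The only thing to be slightly careful about is confirming that $1+\lambda^2\ne 0$, which is automatic from $\eta(1+\lambda^2)=1$ since $\eta$ vanishes at $0$.
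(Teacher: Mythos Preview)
Your proof is correct and follows essentially the same route as the paper: reduce by translation to the line $L_\lambda$ through the origin, then exhibit an explicit matrix in $SO_2(\mathbb F_q)$ rotating the direction $(1,\lambda)$ onto the $x_1$-axis, using $\eta(1+\lambda^2)=1$ to extract the needed square root. The only cosmetic difference is that the paper writes the rotation matrix down directly as $R_\lambda=\frac{1}{\sqrt{1+\lambda^2}}\begin{pmatrix}1&\lambda\\-\lambda&1\end{pmatrix}$, whereas you derive it by parametrizing $SO_2(\mathbb F_q)$ and solving for $\alpha,\beta$; the resulting matrices coincide.
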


\begin{proof}[Proof of Lemma \ref{1Rotation}] 
Recall from Definition \ref{DefTL} that  \(L_\lambda(a, b)\) denotes the line in the two-dimensional plane with slope \(\lambda\), translated by the vector \((a, b)\). By the definition of a 1-coordinatable plane, without loss of generality,  we may assume that $(a, b)=(0,0)$, and  it suffices to prove that there exists a rotation $R_\lambda$ that maps the line through the origin with slope \(\lambda\) onto  the \(x\)-axis.  

We now proeed to prove this. Since $\eta(1+\lambda^2)=1,$  there exists
$\sqrt{1+\lambda^2}\in \mathbb F_q^*$. Define $$R_\lambda \coloneqq
\begin{pmatrix}
\frac{1}{\sqrt{1+\lambda^2}} & \frac{\lambda}{\sqrt{1+\lambda^2}} \\
\frac{-\lambda}{\sqrt{1+\lambda^2}} & \frac{1}{\sqrt{1+\lambda^2}}
\end{pmatrix}.$$
It is clear that $ R_\lambda \in SO_2(\mathbb{F}_q)$, where
$$
SO_2(\mathbb{F}_q) \coloneqq \big\{ A \in GL_2(\mathbb{F}_q) |\ A^\top A = I,\ \det(A) = 1 \big\}.
$$
Moreover, if $(x_1, x_2)\in L_\lambda$, then $x_2=\lambda x_1$, and so 
$$
R_\lambda \begin{pmatrix} x_1 \\  x_2  \end{pmatrix}=R_\lambda \begin{pmatrix} x_1 \\ \lambda x_1  \end{pmatrix} 
=\begin{pmatrix} x_1 \sqrt{1+\lambda^2} \\ 0\end{pmatrix}.$$
Hence, under the rotation transformation \( R_\lambda \), the line \( L_\lambda \) is mapped to the \( x \)-axis, as required.
\end{proof}

\section{Proofs of results}
In this section, we provide proofs of our results: Theorem \ref{mainthm}, Proposition \ref{ProK}, and Theorem \ref{ThmK}.

\subsection{Proof of Theorem \ref{mainthm}}
Since $\Delta(A,B)=\Delta(B, A)$ for $A, B\subseteq \mathbb F_q^d$, we may assume that $B\subseteq \mathbb F_q^d$ is a subset of a $k$-coordinatable plane. Hence,
to complete the proof of Theorem \ref{mainthm},  it will be enough to prove the following theorem:

\begin{theorem} If $A\subseteq \mathbb F_q^d$ is an arbitrary subset and $B\subseteq \mathbb F_q^d$ is a subset of a $k$-coordinatable plane,  then  
$$ |\Delta(A, B)|\ge \frac{1}{2} \min\left\{ q, ~ \frac{|A||B|}{2 q^{d-1}}\right\}.$$
\end{theorem}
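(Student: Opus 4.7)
The plan is to simply combine the distance formula in Lemma~\ref{DistFormula} with the restriction estimate in Proposition~\ref{ProRes}, and then apply an elementary inequality to extract the $\min$ form. Since $\Delta(A,B)=\Delta(B,A)$, I may assume without loss of generality that $B$ is the $k$-coordinatable set, and I restrict to the nontrivial range $1\le k\le d-1$ where Proposition~\ref{ProRes} applies (the case $k=d$ is already handled by Shparlinski's bound, which is weaker than what we want in the non-interesting direction).

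First I invoke Lemma~\ref{DistFormula}, which gives
$$|\Delta(A,B)| \ge \frac{|A|^2|B|^2}{q^{-1}|A|^2|B|^2 + q^{2d}|A|\Bigl(\max\limits_{t\in\mathbb F_q}\sum\limits_{m\in S_t^{d-1}}|\widehat B(m)|^2\Bigr)}.$$
Next, since $B$ is a $k$-coordinatable set with $1\le k\le d-1$, Proposition~\ref{ProRes} yields the bound $\max_{t}\sum_{m\in S_t^{d-1}}|\widehat B(m)|^2 \le 2q^{-d-1}|B|$. Substituting this into the denominator simplifies the second term to $2q^{d-1}|A||B|$, so
$$|\Delta(A,B)| \ge \frac{|A|^2|B|^2}{q^{-1}|A|^2|B|^2 + 2q^{d-1}|A||B|}.$$

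Finally, I use the trivial inequality $\frac{1}{X+Y}\ge \frac{1}{2}\min\bigl(\frac{1}{X},\frac{1}{Y}\bigr)$ with $X=q^{-1}|A|^2|B|^2$ and $Y=2q^{d-1}|A||B|$. A direct computation gives $|A|^2|B|^2/X = q$ and $|A|^2|B|^2/Y = |A||B|/(2q^{d-1})$, yielding
$$|\Delta(A,B)| \ge \frac{1}{2}\min\left\{q,\ \frac{|A||B|}{2q^{d-1}}\right\},$$
which is the claim. There is essentially no obstacle here: all the substantive work has already been done in establishing Lemma~\ref{DistFormula} (the Fourier-analytic counting bound via the homogeneous variety $V_0$) and Proposition~\ref{ProRes} (the $L^2$ restriction estimate that exploits the product structure of a $k$-coordinatable set). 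The remainder is just algebraic manipulation.
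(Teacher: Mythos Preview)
Your proof is correct and follows essentially the same approach as the paper: combine Lemma~\ref{DistFormula} with Proposition~\ref{ProRes} and then extract the $\min$ via the elementary bound $\frac{1}{X+Y}\ge\frac{1}{2}\min(\frac{1}{X},\frac{1}{Y})$. Your added remark restricting to $1\le k\le d-1$ so that Proposition~\ref{ProRes} applies is a useful clarification that the paper leaves implicit.
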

To prove this theorem,  we combine Lemma \ref{DistFormula} and Proposition \ref{ProRes}. Then we obtain the desired result:
$$|\Delta(A, B)| \ge \frac{|A|^2|B|^2}{ q^{-1}|A|^2 |B|^2 + q^{2d} |A| 2q^{-d-1} |B| }\ge \frac{1}{2} \min\left\{ q, ~ \frac{|A||B|}{2q^{d-1}}\right\}.$$

\subsection{Proof of Proposition \ref{ProK}}
For the reader’s convenience, we restate Proposition \ref{ProK} in an alternative form below and provide its proof.
\begin{proposition}\label{ProK1}
Assume \( 0< \alpha \le 1 \), and let \( A\subseteq \mathbb{F}_q^d \) be a set containing a subset \( B \) that lies in a \( k \)-coordinatable plane and satisfies \( |B|\ge |A|^\alpha \). If it further holds that \( |A|\ge 2^{1/(\alpha+1)} q^{d/(1+\alpha)} \), then we conclude that \( |\Delta(A)|\ge q/2 \).
\end{proposition}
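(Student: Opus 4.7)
The plan is to reduce Proposition~\ref{ProK1} directly to Theorem~\ref{mainthm} by taking the two sets in the distance formulation to be $A$ itself and its $k$-coordinatable subset $B$. Since $B \subseteq A$, every distance $\|x-y\|$ with $x \in A, y \in B$ is also a distance between two points of $A$, so $\Delta(A,B) \subseteq \Delta(A)$ and therefore $|\Delta(A)| \ge |\Delta(A,B)|$. It thus suffices to check that the product hypothesis $|A||B| \ge 2q^d$ of Theorem~\ref{mainthm} is satisfied under our assumptions on $|A|$ and $|B|$.

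This is a one-line computation. Using $|B| \ge |A|^\alpha$, one has
\[
|A||B| \ge |A|^{1+\alpha}.
\]
The hypothesis $|A| \ge 2^{1/(\alpha+1)} q^{d/(1+\alpha)}$ is exactly equivalent (after raising both sides to the power $1+\alpha$) to $|A|^{1+\alpha} \ge 2q^d$, so the product hypothesis of Theorem~\ref{mainthm} holds. Applying Theorem~\ref{mainthm} to the pair $(A,B)$, where $B$ is a $k$-coordinatable set, yields $|\Delta(A,B)| \ge q/2$, and by the inclusion above we conclude $|\Delta(A)| \ge q/2$.

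There is essentially no obstacle: the statement is a bookkeeping consequence of Theorem~\ref{mainthm}, with the exponent $d/(1+\alpha)$ arising precisely from solving $|A|^{1+\alpha} = 2q^d$ for $|A|$. The only thing worth remarking on in the write-up is the role of $\alpha$: when $\alpha = 1$ (i.e., $B$ is comparable in size to $A$), one recovers the sharp threshold $|A| \gtrsim q^{d/2}$ discussed after the proposition, while for smaller $\alpha$ the exponent degrades monotonically toward $d$, matching the Shparlinski-type bound in the regime where $B$ is a much smaller coordinatable piece of $A$.
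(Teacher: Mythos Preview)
Your proof is correct and follows essentially the same route as the paper: use $B\subseteq A$ to get $|\Delta(A)|\ge |\Delta(A,B)|$, then verify $|A||B|\ge |A|^{1+\alpha}\ge 2q^d$ from the hypotheses and invoke Theorem~\ref{mainthm}. The additional commentary on the role of $\alpha$ is fine as motivation but not needed for the argument.
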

\begin{proof}[Proof of Proposition \ref{ProK}] Since $A \supseteq B,$  it is clear that
$$ |\Delta(A)|=|\Delta(A, A)| \ge |\Delta(A, B)|.$$
It follows that $|A||B|\ge |A|^{1+\alpha} \ge 2 q^d,$ since $|B|\ge |A|^\alpha$  and \( |A|\ge 2^{1/(\alpha+1)} q^{d/(1+\alpha)} \).
Since $B$ is a $k$-coordinatable plane, we finish the proof by using Theorem  \ref{mainthm}. 
\end{proof}

\subsection{Proof of Theorem \ref{ThmK}}
Suppose that $2\in \mathbb F_q$ is a square and  $E\subseteq \mathbb F_q$ with $|E|\ge Cq^{2/3}$ for a sufficiently large constant $C.$ 
Then, by Lemma \ref{1Rotation}, the line $B:=\{(x,x): x\in E\}$ is a subset of a 1-coordinatable plane in $\mathbb F_q^2.$  
Consider  two subsets \( E_1 \) and \( E_2 \) of a set \( E \) satisfying the following conditions:
\begin{itemize}
  \item \( E_1 \cap E_2 = \emptyset \),
  \item \( E_1 \cup E_2 = E \),
  \item \( 0\le |E_1| - |E_2|  \leq 1 \).
  \end{itemize}
Now define $A=E_1\times E_2\subseteq \mathbb F_q^2.$ Then $|A|=|E_1||E_2|\ge |E|(|E|-1)/4.$  Notice that $\|x-y\|+\|x-z\|= \|(x,x)-(y,z)\|$ for $x,y,z\in E.$ Then, by the definitions of $E_1$ and $E_2$,  we see that
\begin{align*}
\Box(E) &= \{\|x - y\| + \|x - z\| : x, y, z \in E, \, y \neq z\}\\
&\supseteq \{\|x - y\| + \|x - z\| : x\in E, y\in E_1, z\in E_2\}\\
&=\{\|(x,x)-(y,z)\| : x\in E, (y, z)\in E_1\times E_2=A\}\\
&= \Delta(A, B).
\end{align*}
In other words,  we get
\begin{equation}\label{Boxsize} |\Box(E)|\ge |\Delta(A, B)|.\end{equation}
Since $$|A||B|=|E_1||E_2| |E|\ge \frac{|E|^2(|E|-1)}{4}\sim |E|^3,$$ it follows that if $|E|\ge Cq^{2/3}$,  then $|A||B|\ge 2 q^2.$  Therefore,  Theorem \ref{mainthm} implies that $ |\Delta(A,B)|\ge q/2.$
By \eqref{Boxsize},  we obtain the desired result:  $|\Box(E)|\ge q/2.$
\bibliographystyle{amsplain}

\bigskip

\end{document}